\documentclass[12pt]{amsart}  

\usepackage{amssymb}
\usepackage{amscd}
\usepackage{amsmath}

\newtheorem{thm}{Theorem}[section]
\newtheorem*{thm-intro}{Theorem}

\newtheorem{prop}[thm]{Proposition}
\newtheorem{lemma}[thm]{Lemma}

\theoremstyle{remark}
\newtheorem{remark}[thm]{Remark}

\theoremstyle{definition}

\numberwithin{equation}{section}

\DeclareMathOperator{\Hom}{Hom}
\DeclareMathOperator{\Aut}{Aut}
\newcommand{\Cat}{\operatorname{Cat}}
\newcommand{\id}{{\mathtt{Id}}}
\newcommand{\ev}{\operatorname{ev}}
\DeclareMathOperator{\MC}{MC}
\DeclareMathOperator{\ad}{\mathtt{ad}}

\begin{document}

\title{Deligne groupoid revisited}

\author[P.Bressler]{Paul Bressler}
\address {Departamento de Matem\'aticas, Universidad de Los Andes} \email{paul.bressler@gmail.com}

\author[A.Gorokhovsky]{Alexander Gorokhovsky}
\address{Department of Mathematics, UCB 395,
University of Colorado, Boulder, CO~80309-0395, USA}
\email{Alexander.Gorokhovsky@colorado.edu}

\author[R.Nest]{Ryszard Nest}
\address{Department of Mathematics,
Copenhagen University, Universitetsparken 5, 2100 Copenhagen, Denmark}
 \email{rnest@math.ku.dk}

\author[B.Tsygan]{Boris Tsygan}
\address{Department of
Mathematics, Northwestern University, Evanston, IL 60208-2730, USA}
\email{b-tsygan@northwestern.edu}

\begin{abstract}
We show that for a differential graded Lie algebra $\mathfrak{g}$ whose components vanish in degrees below $-1$ the nerve of the Deligne $2$-groupoid is homotopy  equivalent to the simplicial set of $\mathfrak{g}$-valued differential forms introduced by V.~Hinich \cite{H1}.
\end{abstract}

\thanks{
A. Gorokhovsky was partially supported by NSF grant DMS-0900968. B. Tsygan was partially
supported by NSF grant DMS-0906391. R. Nest was partially supported by the Danish National Research Foundation through the Centre
for Symmetry and Deformation (DNRF92)}
\maketitle
\section{Introduction}
The principal result of the present note compares two spaces (simplicial sets) naturally associated with a nilpotent differential graded Lie algebra (DGLA) subject to certain restrictions. Our interest in this problem has its origins in formal deformation theory of associative algebras and, more generally, algebroid stacks (\cite{BGNT}). The results of the present note are used in \cite{BGNT2} to deduce a quasi-classical description of the deformation theory of a gerbe from the formality theorem of M.~Kontsevich.

To a nilpotent DGLA $\mathfrak{h}$ which satisfies the additional condition
\begin{equation}\label{vanishing condition}
\text{$\mathfrak{h}^i = 0$ for $i<-1$}
\end{equation}
P.~Deligne \cite{Del} and, independently, E.~Getzler \cite{G1} associated a (strict) $2$-groupoid which we denote $\MC^2(\mathfrak{h})$ and refer to as the Deligne $2$-groupoid.

Our principal result (Theorem \ref{thm: nerve is sigma}) compares the simplicial nerve $\mathfrak{N}\MC^2(\mathfrak{h})$ of the $2$-groupoid $\MC^2(\mathfrak{h})$, $\mathfrak{h}$ a nilpotent DGLA satisfying \eqref{vanishing condition}, to another simplicial set, denoted $\Sigma(\mathfrak{h})$, introduced by V. Hinich \cite{H1}:
\begin{thm-intro}{(Main theorem)}
Suppose that $\mathfrak{h}$ is a nilpotent DGLA such that $\mathfrak{h}^i = 0$ for $i<-1$. Then, the simplicial sets $\mathfrak{N}\MC^2(\mathfrak{h})$ and $\Sigma(\mathfrak{h})$ are homotopy equivalent.
\end{thm-intro}

In the case when the nilpotent DGLA $\mathfrak{h}$ satisfies $\mathfrak{h}^i = 0$ for $i<0$ and, consequently, $\MC^2(\mathfrak{h})$ is an ordinary groupoid a homotopy equivalence between $\Sigma(\mathfrak{h})$ and the nerve of $\MC^2(\mathfrak{h})$ was constructed by V.~Hinich in \cite{H1}.

Differential grade Lie algebras satisfying \eqref{vanishing condition} arise in formal deformation theory of algebraic structures such as Lie algebras, commutative algebras, associative algebras to name a few. In what follows we shall concentrate on the latter example. Let $k$ denote an algebraically closed field of characteristic zero. For an associative algebra $A$ over $k$ the shifted Hochschild cochain complex $C^\bullet(A)[1]$ has a canonical structure of a DGLA under the Gerstenhaber bracket; we denote this DGLA by $\mathfrak{g}(A)$ for short. Suppose that $\mathfrak{m}$ is a nilpotent commutative $k$-algebra (without unit). Then, $\mathfrak{g}(A)\otimes_k\mathfrak{m}$ is a nilpotent DGLA which satisfies \eqref{vanishing condition}. Thus, the Deligne $2$-groupoid $\MC^2(\mathfrak{g}(A)\otimes_k\mathfrak{m})$ is defined. For an Artin $k$-algebra $R$ with maximal ideal $\mathfrak{m}_R$ the $2$-groupoid $\MC^2(\mathfrak{g}(A)\otimes_k\mathfrak{m}_R)$ is naturally equivalent to the $2$-groupoid of $R$-deformations of the algebra $A$. In this sense the DGLA $\mathfrak{g}(A)$ controls the formal deformation theory of $A$.

The reason for considering the space $\Sigma(\mathfrak{h})$ is that it is defined not just for a DGLA (V. Hinich, \cite{H1}), but, more generally, for any nilpotent $L_\infty$ algebra (E.~Getzler, \cite{G1}). Homotopy invariance properties of the functor $\Sigma$ (Proposition \ref{prop:homotopy invariance}), the theory of J.W.~Duskin (\cite{D}) and the theorem above yield the following result. If $\mathfrak{h}$ is a DGLA satisfying \eqref{vanishing condition}, $\mathfrak{g}$ is a $L_\infty$ algebra $L_\infty$-quasi-isomorphic to $\mathfrak{h}$ and $\mathfrak{m}$ is a nilpotent commutative $k$-algebra, then $\mathfrak{N}\MC^2(\mathfrak{h}\otimes_k\mathfrak{m})$ is homotopy equivalent to $\Sigma(\mathfrak{g}\otimes_k\mathfrak{m})$. Thus, the $2$-groupoid $\MC^2(\mathfrak{h}\otimes_k\mathfrak{m})$ can be reconstructed, up to equivalence, from the space $\Sigma(\mathfrak{g}\otimes_k\mathfrak{m})$. The situation envisaged above arises naturally. Any DGLA $\mathfrak{h}$ is $L_\infty$-quasi-isomorphic to an $L_\infty$ algebra with trivial univalent operation (the differential).

The paper is organized as follows. In Section \ref{s:hom type of a strict 2-gr} we review various  constructions  of nerves of $2$-groupoids and their properties. In section \ref{section: Homotopy types} we recall the definitions of the functor $\Sigma$ (\ref{ss: sigma}) and of the Deligne $2$-groupoid (\ref{ss: Deligne}) and prove basic properties thereof. The proof of the main theorem (Theorem \ref{thm: nerve is sigma}) given in Section \ref{section: mc vs sigma} proceeds by exhibiting canonical homotopy equivalences from $\Sigma(\mathfrak{h})$ and $\mathfrak{N}\MC^2(\mathfrak{h})$ to a third naturally defined simplicial set.

\section{The homotopy type of a strict 2-groupoid}\label{s:hom type of a strict 2-gr}

\subsection{Nerves of simplicial groupoids}\label{sss:BvS}
\subsubsection{Simplicial groupoids}\label{sss:simplicial grpds}
In what follows a \emph{simplicial category} is a category enriched over the category of
simplicial sets. A small simplicial category consists of a set of objects and a simplicial set of
morphisms for each pair of objects.

A simplicial category $\mathtt{G}$ is a particular case of a simplicial object $[p] \mapsto
\mathtt{G}_p$ in $\Cat$ whose simplicial set of objects $[p] \mapsto N_0\mathtt{G}_p$
is constant.

A simplicial category is a simplicial groupoid if it is a groupoid in each (simplicial) degree.

\subsubsection{The na\"{\i}ve nerve}
Suppose that $\mathtt{G}$ is a simplicial category. Applying the nerve functor degree-wise
we obtain the bi-simplicial set $N\mathtt{G} \colon ([p],[q]) \mapsto N_q\mathtt{G}_p$
whose diagonal we denote by $\mathcal{N}\mathtt{G}$ and refer to as the
\emph{na\"{\i}ve nerve} of $\mathtt{G}$.


\subsubsection{The simplicial nerve}
For a simplicial category $\mathtt{G}$  the
\emph{simplicial nerve}, also known as the homotopy coherent nerve,
$\mathfrak{N}\mathtt{G}$ is represented by the cosimplicial object in $[p] \mapsto
\Delta_\mathfrak{N}^p \in Cat_\Delta$, i.e
\[
\mathfrak{N}_p\mathtt{G} = \Hom_{\Cat_\Delta}(\Delta_\mathfrak{N}^p, \mathtt{G}) .
\]
Here, $\Delta_\mathfrak{N}^p$ is the canonical free simplicial resolution of $[p]$ which
admits the following explicit description (\cite{C}).

The set of objects of $\Delta_\mathfrak{N}^p$ is $\{0,1,\ldots, p\}$. For $0 \leq i \leq j
\leq p$ the simplicial set of morphisms is given by $\Hom_{\Delta_{\mathfrak N}^p}(i,j) =
N\mathcal{P}(i,j)$. The category $\mathcal{P}(i,j)$ is a sub-poset of $2^{\{0,\ldots,p\}}$ (with the induced partial ordering whereby viewed as a category) given by
\[
\mathcal{P}(i,j) = \{ I\subset\mathbb{Z} \mid (i,j\in I)\, \&\, (k\in I \implies i\leqslant k\leqslant j)\} .
\]
The composition in $\Delta_{\mathfrak N}^p$ is induced by functors
\[
\mathcal{P}(i,j)\times \mathcal{P}(j,k)\to {\mathcal P}(i,k) \colon (I,J)
\mapsto I\cup J .
\]
In particular, $\Delta_{\mathfrak N}^0 = [0]$ and $\Delta_{\mathfrak
N}^1 = [1]$

We refer the reader to \cite{H2} for applications to deformation theory and to \cite{L} for the connection with higher category theory. The simplicial nerve of a simplicial groupoid is a Kan complex which reduces to the usual nerve for ordinary groupoids.

Since $\Delta_\mathfrak{N}^0 = [0]$ (respectively, $\Delta_\mathfrak{N}^1 = [1]$) it follows that $\mathfrak{N}_0\mathtt{G}$ (respectively, $\mathfrak{N}_1\mathtt{G}$) is the set of objects (respectively, the set of morphisms) of $\mathtt{G}_0$.

\subsubsection{Comparison of nerves}
We refer the reader to \cite{H2} for the definition of the canonical map of simplicial sets $\mathcal{N}\mathtt{G} \to \mathfrak{N}\mathtt{G}$. In what follows we will make use of the following result of loc. cit.

\begin{thm}[\cite{H2}]\label{thm: comparison of nerves}
For any simplicial groupoid $\mathtt{G}$ the canonical map $\mathcal{N}\mathtt{G} \to
\mathfrak{N}\mathtt{G}$ is an equivalence.
\end{thm}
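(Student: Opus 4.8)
Take the construction of the canonical map $\kappa_{\mathtt G}\colon\mathcal N\mathtt G\to\mathfrak N\mathtt G$ as given; we must show it is a weak homotopy equivalence for every simplicial groupoid $\mathtt G$. (Both sides are Kan complexes: each mapping space $\Hom_{\mathtt G}(x,y)$ is empty or a torsor over the simplicial group $\Hom_{\mathtt G}(x,x)$, hence Kan, so $\mathfrak N\mathtt G$ is Kan by the standard fact on homotopy coherent nerves of locally Kan simplicial categories, while the Kan condition for those simplicial automorphism groups makes $N\mathtt G$ Reedy fibrant, so its diagonal $\mathcal N\mathtt G$ is Kan.) The plan is to use naturality of $\kappa$ to reduce to the case of a one-object simplicial groupoid, and there to match a model computation against the explicit combinatorial form of $\kappa$.

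\emph{Reduction to $\mathtt G=BG$.} Both $\mathcal N$ and $\mathfrak N$ carry disjoint unions to disjoint unions --- for $\mathfrak N$ because $\Delta_{\mathfrak N}^p$ is a connected category, so every functor out of it has connected image --- compatibly with $\kappa$, so we may assume all objects of $\mathtt G_0$ are isomorphic. Fix an object $x$, set $G=\Hom_{\mathtt G}(x,x)$, and let $\iota\colon BG\hookrightarrow\mathtt G$ be the inclusion of the full simplicial subgroupoid on $x$. Then $\iota$ is a Dwyer--Kan equivalence of locally Kan simplicial categories, so $\mathfrak N\iota$ is a weak equivalence (using that $\mathfrak N$ is the right adjoint of a Quillen equivalence for the Bergner model structure); and $\mathcal N\iota$ is a weak equivalence because $BG_p\hookrightarrow\mathtt G_p$ is an equivalence of ordinary groupoids for each $p$, so the induced map of bisimplicial sets is a weak equivalence in each column and hence on diagonals. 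By the resulting commuting square it suffices to treat $\mathtt G=BG$.

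\emph{The case $\mathtt G=BG$.} Here $\mathcal N(BG)$ is the diagonal of the bisimplicial bar construction $([p],[q])\mapsto G_p^{\times q}$ and $\mathfrak N(BG)$ is the homotopy coherent bar construction on $G$, whose $p$-simplices are the coherent families of maps $N\mathcal P(i,j)\to G$. Both are reduced Kan complexes and both are known to be weakly equivalent to the Eilenberg--MacLane classifying complex $\overline{W}G$: the former by the Cegarra--Remedios (generalized Eilenberg--Zilber) comparison of the diagonal with $\overline{W}$, the latter by the standard identification of the homotopy coherent nerve of $BG$ with the classifying space of $G$. It therefore remains to check that $\kappa_{BG}$ is compatible with these two identifications, i.e.\ that the triangle formed by $\kappa_{BG}$ and the two equivalences with $\overline{W}G$ commutes up to homotopy; granting this, $\kappa_{BG}$ is a weak equivalence.

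\emph{The crux.} This compatibility is where the real work lies and cannot be bypassed: the two equivalences with $\overline{W}G$ exist abstractly, but matching them through $\kappa$ forces one to use the explicit recipe of \cite{H2} for $\kappa$ --- the passage from a $p$-simplex of $\mathcal N\mathtt G$, namely a family $\sigma_{ij}\in\Hom_{\mathtt G}(x_i,x_j)_p$ closed under the composition of $\mathtt G_p$, to the functor $\Delta_{\mathfrak N}^p\to\mathtt G$ assembled from the induced maps $N\mathcal P(i,j)\to\Hom_{\mathtt G}(x_i,x_j)$. The robust way to handle it is a skeletal induction over the cell structure of the cosimplicial resolution $\Delta_{\mathfrak N}^\bullet$: present $\Delta_{\mathfrak N}^p$ as an iterated pushout of simplicial categories obtained by freely attaching the cubical mapping spaces $N\mathcal P(0,p)$, note that $\Hom_{\Cat_\Delta}(-,\mathtt G)$ turns these pushouts into pullbacks of Kan fibrations, and compare the resulting tower of principal fibrations for $\mathfrak N\mathtt G$ with the coskeletal tower computing $\operatorname{diag}N\mathtt G$, verifying inductively that $\kappa$ is an equivalence on each layer. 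The hypothesis that $\mathtt G$ is a groupoid in each degree is used throughout --- it is precisely what makes all the nerves and mapping spaces in sight Kan complexes, so that these are genuine towers of Kan fibrations and the relevant homotopy limits and colimits are computed on the nose.
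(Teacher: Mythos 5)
The first thing to note is that the paper does not prove this statement: Theorem~\ref{thm: comparison of nerves} is imported verbatim from Hinich's paper \cite{H2}, and the surrounding text says only that the canonical map $\mathcal{N}\mathtt{G}\to\mathfrak{N}\mathtt{G}$ is the one constructed in loc.\ cit.\ and that the result will be used as a black box. So there is no in-paper argument to compare yours against; your proposal has to stand on its own as a proof of Hinich's theorem.

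As written it does not yet stand. The two reduction steps are reasonable in outline (disjoint unions, and the passage to the full one-object subgroupoid $BG$ via a Dwyer--Kan equivalence, using that both nerve constructions send such equivalences of locally Kan simplicial groupoids to weak equivalences). But the entire content of the theorem is then concentrated in what you yourself call the crux: showing that $\kappa_{BG}$ is compatible, up to homotopy, with the two separately-known identifications of $\mathcal{N}(BG)$ and $\mathfrak{N}(BG)$ with $\overline{W}G$ --- or, in your alternative formulation, carrying out the skeletal induction over the cell structure of $\Delta^{\bullet}_{\mathfrak{N}}$ and matching the resulting tower of principal fibrations against the one computing the diagonal. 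Neither is executed: you state that this step ``cannot be bypassed'' and then describe a strategy rather than performing it. You never verify that the two towers have matching layers, nor that $\kappa$ induces an equivalence on each layer; without that, your argument only shows that $\mathcal{N}(BG)$ and $\mathfrak{N}(BG)$ are abstractly weakly equivalent, which says nothing about the particular canonical map $\kappa_{BG}$. Since the theorem is a statement about that specific map, the proof is incomplete at exactly the point where all the work is. (A secondary, fixable, issue: the Kan-ness of $\mathcal{N}\mathtt{G}$ via ``Reedy fibrancy'' of the bisimplicial nerve is asserted loosely --- what one actually needs is the $\pi_*$-Kan condition of Bousfield--Friedlander for the levelwise nerve of a simplicial groupoid; this is standard and not where the proposal fails.)
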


\subsection{Strict 2-groupoids}

\subsubsection{From strict $2$-groupoids to simplicial groupoids}\label{sss:FStSG}
Suppose that $\mathtt{G}$ is a strict $2$-groupoid, i.e. a groupoid enriched over the
category of groupoids. Thus, for every $g, g^\prime \in \mathtt{G}$, we have the groupoid
$\Hom_\mathtt{G}(g, g^\prime)$ and the composition is strictly associative.

The nerve functor $[p] \mapsto N_p(\cdot) := \Hom_\mathrm{Cat}([p],\cdot)$
commutes with products. Let $\mathtt{G}_p$ denote the category with the same objects
as $\mathtt{G}$ and with morphisms defined by $\Hom_{\mathtt{G}_p}(g, g^\prime) =
N_p\Hom_\mathtt{G}(g, g^\prime)$; the composition of morphisms is induced by the
composition in $\mathtt{G}$. Note that the groupoid $\mathtt{G}_0$ is obtained from
$\mathtt{G}$ by forgetting the $2$-morphisms.

The assignment $[p] \mapsto \mathtt{G}_p$ defines a simplicial object in groupoids with
the constant simplicial set of objects, i.e. a simplicial groupoid which we denote by $\mathtt{\widetilde{G}}$.

\begin{lemma}\label{lemma:the two gothic Ns}
The simplicial nerve $\mathfrak{N}\mathtt{\widetilde{G}}$ admits the following explicit
description:
\begin{enumerate}
\item There is a canonical bijection between $\mathfrak{N}_0\mathtt{\widetilde{G}}$ and the set of
    objects of $\mathtt{G}$.

\item For $n\geq 1$ there is a canonical bijection between $\mathfrak{N}_n\mathtt{\widetilde{G}}$
    and the set of data of the form $((\mu_i)_{0\leq i\leq n}, (g_{ij})_{0\leq i < j\leq n},
    (c_{ijk})_{0\leq i < j<k\leq n})$, where $(\mu_i)$ is an $(n+1)$-tuple of objects of
    $\mathtt{G}$, $(g_{ij})$ is a collection of $1$-morphisms
    $g_{ij}\colon\mu_j\to \mu_i$ and $(c_{ijk})$ is a collection of $2$-morphisms
    $c_{ijk}:g_{ij}g_{jk}\to g_{ik}$ which satisfies
    \begin{equation} \label{eq:cece}
    c_{ijl}c_{jkl}=c_{ikl}c_{ijk}
    \end{equation}
    (in
    the set of $2$-morphisms $g_{ij}g_{jk}g_{kl}\to g_{il}$).
\end{enumerate}
For a morphism $f\colon [m] \to [n]$ in $\Delta$ the induced structure map
$f^*\colon\mathfrak{N}_n\mathtt{\widetilde{G}} \to \mathfrak{N}_m\mathtt{\widetilde{G}}$ is given (under
the above bijection) by $f^*((\mu_i),(g_{ij}),(c_{ijk})) = ((\nu_i),(h_{ij}),(d_{ijk}))$,
where $\nu_i = \mu_{f(i)}$, $h_{ij} = g_{f(i),f(j)}$, $d_{ijk} = c_{f(i),f(j),f(k)}$ (cf.
\cite{D}).
\end{lemma}
\begin{proof}
An $n$-simplex of $\mathfrak{N}\mathtt{\widetilde{G}}$ is the following collection of
data:
\begin{enumerate}
\item objects $\mu_0, \ldots, \mu_n$ of $\mathtt{G}$;
\item morphisms of simplicial sets $N\mathcal{P}(i,j)) \to N\Hom_\mathtt{G}(\mu_i,
    \mu_j))$ intertwining the maps induced on the nerves by composition functors
    $\mathcal{P}(i,j)\times\mathcal{P}(j,k) \to \mathcal{P}(i,k)$ and
    $\Hom_\mathtt{G}(\mu_i,  \mu_j)\times \Hom_\mathtt{G}(\mu_j,  \mu_k)\to
    \Hom_\mathtt{G}(\mu_i,  \mu_k)$.
\end{enumerate}
Since the nerve functor is fully faithful, the above data are equivalent to the following:
\begin{enumerate}
\item objects $\mu_0, \ldots, \mu_n$ of $\mathtt{G}$;

\item for any $I\in N_0\mathcal{P}(i,j)$, a 1-morphism $g_I:\mu_j \to \mu_i$ in
    $\mathtt{G}$;

\item for any morphism $J\to I$ in $\mathcal{P}(i,j)$, a 2-morphism $c_{IJ} \colon g_J\to
    g_I$, such that
\begin{equation}\label{eq:comp for c}
c_{IJ}c_{JK}=c_{IK}
\end{equation}
These data have to be compatible with the composition pairings
$\mathcal{P}(i,j)\times\mathcal{P}(j,k)\to {\mathcal P}(i,k)$ and
$\Hom_\mathtt{G}(\mu_i,  \mu_j)\times\Hom_\mathtt{G}(\mu_j,  \mu_k)\to
\Hom_\mathtt{G}(\mu_i,  \mu_k)$.
\end{enumerate}
Let $g_{ij} \colon\mu_j\to \mu_i$ denote the morphism $g_{\{i,j\}}.$ By compatibility with
compositions, if $I=\{i,i_1,\ldots, i_k, j\}$ then $g_I=g_{ii_1}\ldots g_{i_kj}$. Let
$c_{ijk}$ denote the two-morphism $c_{ \{i,j,k\}, \{i,k\}} \colon g_{ik}\to
g_{ij}g_{jk}.$ Now, by virtue of \eqref{eq:comp for c} and of compatibility with
compositions, $c_{ijk}$ satisfy the two-cocycle identity \eqref{eq:comp for c}
and determine  $c_{IJ}$ for any $I$, $J$.
\end{proof}

In what follows, for  a strict 2-groupoid $\mathtt{G}$,  we will denote by $\mathcal{N}\mathtt{G}$ (respectively $\mathfrak{N}\mathtt{G}$)
  the  na\"{\i}ve  (respectively simplicial) nerve   of the associated simplicial groupoid $\mathtt{\widetilde{G}}$.

\section{Homotopy types associated with $L_\infty$-algebras}\label{section: Homotopy types}

\subsection{$L_\infty$-algebras}
We follow the notation of \cite{G1} and refer the reader to loc. cit. for details.

Recall that an $L_\infty$-algebra is a graded vector space $\mathfrak{g}$ equipped with
operations
\[
\textstyle{\bigwedge}^k \mathfrak{g} \to \mathfrak{g}[2-k] \colon x_1\wedge\ldots\wedge x_k \mapsto [x_1,\ldots,x_k]
\]
defined for $k = 1,2,\ldots$. which satisfy a sequence of Jacobi identities.

It follows from the Jacobi identities that the unary operation $[.]\colon \mathfrak{g} \to
\mathfrak{g}[1]$ is a differential, which we will denote by $\delta$.

An $L_\infty$-algebra is \emph{abelian} if all operations with valency two and higher (i.e. all
operations except for $\delta$) vanish. In other words, an abelian $L_\infty$-algebra is a
complex. An $L_\infty$-algebra structure with vanishing operations of valency three and
higher reduces to a structure of a DGLA.

The \emph{lower central series} of an $L_\infty$-algebra $\mathfrak{g}$ is the canonical
decreasing filtration $F^\bullet\mathfrak{g}$ with $F^i\mathfrak{g} = \mathfrak{g}$ for $i
\leq 1$ and defined recursively for $i\geq 1$ by
\[
F^{i+1}\mathfrak{g} = \sum_{k=2}^\infty \sum_{\substack{i = i_1+\dots + i_k \\ i_k \leqslant i}} [F^{i_1}\mathfrak{g},\ldots,F^{i_k}\mathfrak{g}] .
\]

An $L_\infty$-algebra is \emph{nilpotent} if there exists an $i$ such that $F^i\mathfrak{g}
= 0$.

\subsubsection{Maurer-Cartan elements}
Suppose that $\mathfrak{g}$ is a nilpotent $L_\infty$-algebra. For $\mu\in\mathfrak{g}^1$ let
\begin{equation}\label{curva}
\mathcal{F}(\mu) = \delta\mu + \sum_{k=2}^\infty \frac1{k!} [\mu^{\wedge k}] .
\end{equation}
The element $\mathcal{F}(\mu)$ of $\mathfrak{g}^2$ is called the \emph{curvature} of $\mu$. For any $\mu \in \mathfrak{g}^1$ the curvature $\mathcal{F}(\mu)$ satisfies
the Bianchi identity (\cite{G1}, Lemma 4.5)
\begin{equation}\label{bianchi}
\delta\mathcal{F}(\mu) + \sum_{k=1}^\infty\frac1{k!}[\mu^{\wedge k},\mathcal{F}(\mu)] = 0 .
\end{equation}

An element $\mu \in \mathfrak{g}^1$ is called a \emph{Maurer-Cartan element} (of $\mathfrak{g}$) if it satisfies the condition
$\mathcal{F}(\mu) = 0$.
The set of Maurer-Cartan elements of $\mathfrak{g}$ will be denoted
$\MC(\mathfrak{g})$:
\[
\MC(\mathfrak{g}):= \{ \mu \in \mathfrak{g}^1 \mid \mathcal{F}(\mu) = 0 \}.
\]
The set $\MC(\mathfrak{g})$ is pointed by the distinguished element $0 \in \mathfrak{g}^1$.

Suppose that $\mathfrak{a}$ is an abelian $L_\infty$-algebra. Then,
\[
\MC(\mathfrak{a}) = Z^1(\mathfrak{a}) := \ker(\delta \colon \mathfrak{a}^1 \to \mathfrak{a}^2) ,
\]
hence is equipped with a canonical structure of an abelian group.

\subsubsection{Central extensions}\label{subsubsection: central extensions MC}
Suppose that $\mathfrak{g}$ is a $L_\infty$-algebra and $\mathfrak{a}$ is a
subcomplex of $(\mathfrak{g}, \delta)$ such that
$[\mathfrak{a},\mathfrak{g},\ldots,\mathfrak{g}] = 0$ for all $k \geq 2$. In this case we will say that \emph{$\mathfrak{a}$ is central in $\mathfrak{g}$}.

If $\mathfrak{a}$ is central in $\mathfrak{g}$, then there is a unique structure of an
$L_\infty$-algebra on $\mathfrak{g}/\mathfrak{a}$ such that the projection
$\mathfrak{g} \to \mathfrak{g}/\mathfrak{a}$ is a map of $L_\infty$-algebras. If $\mathfrak{g}$ is nilpotent, then so is $\mathfrak{g}/\mathfrak{a}$.

In what follows we assume that $\mathfrak{g}$ is a nilpotent $L_\infty$-algebra and $\mathfrak{a}$ is central in $\mathfrak{g}$.

\begin{lemma}\label{lemma: props of MC central extension}
{~}
\begin{enumerate}
\item The addition operation on $\mathfrak{g}^1$ restricts to a free action of the abelian group
$\MC(\mathfrak{a})$ on the set $\MC(\mathfrak{g})$.

\item The map $\MC(\mathfrak{g}) \to \MC(\mathfrak{g}/\mathfrak{a})$ is constant on the orbits of the action.

\item The induced map $\MC(\mathfrak{g})/\MC(\mathfrak{a}) \to \MC(\mathfrak{g}/\mathfrak{a})$ is
injective.
\end{enumerate}
\end{lemma}
\begin{proof}
Suppose that $\alpha \in \mathfrak{a}^1$ and $\mu \in \mathfrak{g}^1$. Since
$\mathfrak{a}$ is central in $\mathfrak{g}$, $[(\alpha + \mu)^{\wedge k}] =
[\mu^{\wedge k}]$ for $k \geq 2$ and $\mathcal{F}(\alpha + \mu) = \delta\alpha +
\mathcal{F}(\mu)$ (in the notation of \eqref{curva}). Therefore,
$\MC(\mathfrak{a}) + \MC(\mathfrak{g}) = \MC(\mathfrak{g})$. In other words, the
addition operation in $\mathfrak{g}^1$ restricts to an action of the abelian group
$\MC(\mathfrak{a})$ on the set $\MC(\mathfrak{g})$ which is obviously free. Since the map
$\MC(\mathfrak{g}) \to \MC(\mathfrak{g}/\mathfrak{a})$ is the restriction of the map $\mathfrak{g} \to \mathfrak{g}/\mathfrak{a}$ constant on the orbits of the
action, i.e. factors through $\MC(\mathfrak{g})/\MC(\mathfrak{a})$, and the induced map
$\MC(\mathfrak{g})/\MC(\mathfrak{a}) \to \MC(\mathfrak{g}/\mathfrak{a})$ is
injective.
\end{proof}

\subsubsection{The obstruction map}\label{subsubsection: The obstruction map}
The image of the map $\MC(\mathfrak{g}) \to \MC(\mathfrak{g}/\mathfrak{a})$ may be described in terms of the obstruction map \eqref{eqn:o2} which we construct presently.

If $\mu \in \mathfrak{g}^1$ and $\mu + \mathfrak{a}^1 \in
\MC(\mathfrak{g}/\mathfrak{a})$, then $\mathcal{F}(\mu + \mathfrak{a}^1) =
\mathcal{F}(\mu) + \delta\mathfrak{a}^1 \subset \mathfrak{a}^2$ and the Bianchi
identity \eqref{bianchi} reduces to $\delta\mathcal{F}(\mu + \mathfrak{a}^1) = 0$, i.e.
the assignment $\mu + \mathfrak{a}^1 \mapsto \mathcal{F}(\mu + \mathfrak{a}^1)$
gives rise to a well-defined map
\begin{equation}\label{eqn:o2}
o_2 \colon \MC(\mathfrak{g}/\mathfrak{a}) \to H^2(\mathfrak{a})
\end{equation}
(notation borrowed from \cite{GM}, 2.6).

\begin{lemma}
The sequence of pointed sets
\begin{equation}\label{ses MC central extension}
0 \to \MC(\mathfrak{g})/\MC(\mathfrak{a}) \to \MC(\mathfrak{g}/\mathfrak{a}) \xrightarrow{o_2} H^2(\mathfrak{a})
\end{equation}
is exact.
\end{lemma}
\begin{proof}
If $\mathcal{F}(\mu + \mathfrak{a}^1) \subset \delta\mathfrak{a}^1$, then there exists
$\alpha\in\mathfrak{a}^1$ such that $\mathcal{F}(\mu + \alpha) = 0$, i.e. $\mu +
\mathfrak{a}^1$ is in the image of $\MC(\mathfrak{g}) \to
\MC(\mathfrak{g}/\mathfrak{a})$.
\end{proof}

\subsection{The functor $\Sigma$}\label{ss: sigma}
In what follows we denote by $\Omega_n$, $n = 0, 1, 2, \ldots$ the commutative differential graded algebra over $\mathbb{Q}$ with generators $t_0,\ldots,t_n$ of degree zero and $dt_0, \ldots, dt_n$ of degree one subject to the relations $t_0 + \cdots + t_n = 1$ and $dt_0 + \cdots + dt_n = 0$. The differential $d \colon \Omega_n \to \Omega_n[1]$ is defined by $t_i \mapsto dt_i$ and $dt_i \mapsto 0$. The assignment $[n] \mapsto \Omega_n$ extends in a natural way to a simplicial commutative differential graded algebra.

\subsubsection{The simplicial set $\Sigma(\mathfrak{g})$}
For a nilpotent $L_\infty$-algebra $\mathfrak{g}$ and a non-negative integer $n$ let
\[
\Sigma_n(\mathfrak{g}) =\MC(\mathfrak{g}\otimes\Omega_n) .
\]
Equipped with structure maps induced by those of $\Omega_\bullet$ the assignment $n \mapsto \Sigma_n(\mathfrak{g})$ defines a simplicial set denoted $\Sigma(\mathfrak{g})$.

The simplicial set $\Sigma(\mathfrak{g})$ was introduced by Hinich in \cite{H1} for DGLA
and used by Getzler in \cite{G1} (where it is denoted $\MC_\bullet(\mathfrak{g})$) for
general nilpotent $L_\infty$-algebras.

\subsubsection{Abelian algebras}
If $\mathfrak{a}$ is an abelian algebra, then $\Sigma(\mathfrak{a})$ is given by
$\Sigma_n(\mathfrak{a}) = Z^1(\Omega_n\otimes\mathfrak{a}) =
Z^0(\Omega_n\otimes\mathfrak{a}[1])$ and has a canonical structure of a simplicial
abelian group. In particular, it is a Kan simplicial set.

Recall that the Dold-Kan correspondence associates to a complex of abelian groups $A$ a
simplicial abelian group $K(A)$ defined by $K(A)_n = Z^0(C^\bullet([n];A))$, the group of
cocycles of (total) degree zero in the complex of simplicial cochains on the $n$-simplex with
coefficients in $A$.

The integration map $\int\colon \Omega_n\otimes\mathfrak{a} \to
C^\bullet([n];\mathfrak{a})$ induces a homotopy equivalence
\begin{equation}\label{abelian integration}
\int\colon \Sigma(\mathfrak{a}) \to K(\mathfrak{a}[1]) .
\end{equation}
Thus, $\pi_i\Sigma(\mathfrak{a}) \cong H^{1-i}(\mathfrak{a})$.

\subsubsection{Central extensions}\label{subsubsection: central extensions sigma}
Suppose that $\mathfrak{g}$ is a nilpotent $L_\infty$-algebra and $\mathfrak{a}$ is a central subalgebra in $\mathfrak{g}$. Then, for $n = 0, 1, \ldots$,
$\Omega_n\otimes\mathfrak{a}$ is central in $\Omega_n\otimes\mathfrak{g}$.

\begin{lemma}{~}
\begin{enumerate}
\item The addition operation on $(\Omega_n\otimes\mathfrak{g})^1$ induces a principal action of the simplicial abelian group $\Sigma(\mathfrak{a})$ on the simplicial set $\Sigma(\mathfrak{g})$.

\item The map $\Sigma(\mathfrak{g}) \to \Sigma(\mathfrak{g}/\mathfrak{a})$ factors through $\Sigma(\mathfrak{g})/\Sigma(\mathfrak{a})$.

\item The induced map $\Sigma(\mathfrak{g})/\Sigma(\mathfrak{a}) \to \Sigma(\mathfrak{g}/\mathfrak{a})$ is injective.
\end{enumerate}
\end{lemma}
\begin{proof}
Follows from Lemma \ref{lemma: props of MC central extension} and the naturality properties of the constructions in \ref{subsubsection: central extensions MC}.
\end{proof}

For $n=0,1,\ldots$ the map $([n]\to[0])^\ast \colon \mathbb{Q} \to \Omega_n$ is a quasi-isomorphism, with the quasi-inverse provided by the map induced by any morphism $[0] \to [n]$. Therefore, the map $\mathfrak{a} \to \Omega_n\otimes\mathfrak{a}$ is a quasi-isomorphism as well. The induced isomorphisms $H^2(\mathfrak{a}) \cong H^2(\Omega_n\otimes\mathfrak{a})$ give rise to the isomorphism of the constant simplicial set $H^2(\mathfrak{a})$ and $n \mapsto H^2(\Omega_n\otimes\mathfrak{a})$.

The maps
\[
o_{2,n} \colon \Sigma_n(\mathfrak{g}/\mathfrak{a}) = \MC(\Omega_n\otimes\mathfrak{g}/\mathfrak{a}) \to H^2(\Omega_n\otimes\mathfrak{a}) \cong H^2(\mathfrak{a})
\]
assemble into the map of simplicial sets
\begin{equation}\label{eq: Obst und Gemuse 1}
o_2 \colon \Sigma(\mathfrak{g}/\mathfrak{a})
\to H^2(\mathfrak{a}) .
\end{equation}
which factors as $\Sigma(\mathfrak{g}/\mathfrak{a}) \to \pi_0\Sigma(\mathfrak{g}/\mathfrak{a}) \to H^2(\mathfrak{a})$.

Let
$\Sigma(\mathfrak{g}/\mathfrak{a})_0 = o_2^{-1}(0)$. Thus, by \eqref{ses MC central extension},
$\Sigma(\mathfrak{g}/\mathfrak{a})_0$ is a union of connected components of
$\Sigma(\mathfrak{g}/\mathfrak{a})$ equal to the range of the map
$\Sigma(\mathfrak{g})/\Sigma(\mathfrak{a}) \to \Sigma(\mathfrak{g}/\mathfrak{a})$.

It follows that the map $\Sigma(\mathfrak{g}) \to
\Sigma(\mathfrak{g}/\mathfrak{a})_0$ is a principal fibration with group
$\Sigma(\mathfrak{a})$, in particular, a Kan fibration (\cite{M}, Lemma 18.2).

\begin{lemma}\label{lemma: sigma kan}
Suppose that $\mathfrak{g}$ is a nilpotent $L_\infty$-algebra. Then,
$\Sigma(\mathfrak{g})$ is a Kan simplicial set.
\end{lemma}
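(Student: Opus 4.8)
The plan is to induct on the nilpotency length of $\mathfrak g$, i.e.\ on the smallest integer $N$ with $F^N\mathfrak g = 0$, using the lower central series to exhibit $\mathfrak g$ as an iterated central extension of abelian $L_\infty$-algebras and then reducing Kan-ness to the principal fibration established just above. If $N\le 1$, then $\mathfrak g = 0$ and there is nothing to prove. If $N = 2$, then all brackets of valency $\ge 2$ vanish, so $\mathfrak g$ is abelian and $\Sigma(\mathfrak g)$ is a simplicial abelian group, in particular a Kan complex. So assume $N\ge 3$ and put $\mathfrak a = F^{N-1}\mathfrak g$.

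First I would verify that $\mathfrak a$ is a central subcomplex of $\mathfrak g$ in the sense of \ref{subsubsection: central extensions MC}. Centrality is a weight count: for $k\ge 2$, $a\in\mathfrak a = F^{N-1}\mathfrak g$ and $x_2,\dots,x_k\in\mathfrak g = F^1\mathfrak g$, the defining recursion of the lower central series gives $[a,x_2,\dots,x_k]\in F^{\,(N-1)+(k-1)+1}\mathfrak g = F^{N+k-1}\mathfrak g\subseteq F^N\mathfrak g = 0$. For $\delta$-stability one shows, by induction on $i$, that every $F^i\mathfrak g$ is $\delta$-invariant: applying $\delta$ to a generator $[x_1,\dots,x_k]$ of $F^{i+1}\mathfrak g$ (so $x_l\in F^{i_l}\mathfrak g$, $\sum_l i_l = i$, $k\ge 2$) and invoking the $L_\infty$ Jacobi identity with $k$ arguments expresses $\delta[x_1,\dots,x_k]$ as a signed sum of terms $[x_1,\dots,\delta x_j,\dots,x_k]$ — which lie in $F^{i+1}\mathfrak g$ because $\delta x_j\in F^{i_j}\mathfrak g$ by the inductive hypothesis — and of terms $\bigl[[x_{\sigma(1)},\dots,x_{\sigma(p)}],x_{\sigma(p+1)},\dots,x_{\sigma(k)}\bigr]$ with $2\le p\le k-1$, which again have total weight $i$ and valency $k+1-p\ge 2$, hence lie in $F^{i+1}\mathfrak g$. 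Granting this, $\bar{\mathfrak g}:=\mathfrak g/\mathfrak a$ acquires the quotient $L_\infty$-structure for which the projection $\pi$ is a strict morphism, so $F^i\bar{\mathfrak g} = \pi(F^i\mathfrak g)$ for all $i$; in particular $F^{N-1}\bar{\mathfrak g} = \pi(\mathfrak a) = 0$, i.e.\ $\bar{\mathfrak g}$ is nilpotent of length $\le N-1 < N$.

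By the inductive hypothesis $\Sigma(\bar{\mathfrak g})$ is a Kan complex. Hence so is its sub-simplicial set $\Sigma(\bar{\mathfrak g})_0 = o_2^{-1}(0)$: it is a union of connected components of $\Sigma(\bar{\mathfrak g})$, and since $\Lambda^n_k$ and $\Delta^n$ are connected, any horn in $\Sigma(\bar{\mathfrak g})_0$ has a filler in $\Sigma(\bar{\mathfrak g})$ whose image again lies in $\Sigma(\bar{\mathfrak g})_0$. By the discussion preceding the lemma the map $p\colon\Sigma(\mathfrak g)\to\Sigma(\bar{\mathfrak g})_0$ is a principal $\Sigma(\mathfrak a)$-fibration, in particular a Kan fibration. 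Finally, the total space of a Kan fibration over a Kan base is Kan: given $\Lambda^n_k\to\Sigma(\mathfrak g)$, push it forward to $\Lambda^n_k\to\Sigma(\bar{\mathfrak g})_0$, fill it there to $\Delta^n\to\Sigma(\bar{\mathfrak g})_0$, and lift this filler along $p$ using the right lifting property of $p$ against $\Lambda^n_k\hookrightarrow\Delta^n$; the resulting map $\Delta^n\to\Sigma(\mathfrak g)$ is the desired horn filler. Thus $\Sigma(\mathfrak g)$ is a Kan complex.

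The one place requiring genuine computation is the second paragraph — the $\delta$-stability of the lower central series and the centrality weight estimate — which amounts to unwinding the $L_\infty$ Jacobi relations; everything else is formal simplicial homotopy theory together with the principal-fibration statement already in hand. (Alternatively one could simply cite Getzler \cite{G1} for the Kan property of $\Sigma(\mathfrak g) = \MC_\bullet(\mathfrak g)$; the argument above keeps the exposition self-contained.)
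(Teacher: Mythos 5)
Your proposal is correct and follows exactly the paper's (one-line) argument: the paper proves the lemma by observing that $\Sigma(\mathfrak{g})$ is an iterated principal fibration built from the lower central series, and you have simply written out the induction, the centrality/$\delta$-stability checks, and the standard fact that the total space of a Kan fibration over a Kan base is Kan. No discrepancies to report.
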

\begin{proof}
If that $\mathfrak{g}$ is abelian then $\Sigma(\mathfrak{g})$ is a simplicial group and therefore a Kan simplicial set.

Let $F^\bullet\mathfrak{g}$ denote the lower central series. Assume that $Gr^i_F\mathfrak{g} \neq 0$ if and only if $0\leq i \leq n$; that is, $\mathfrak{g}$ is \emph{nilpotent of length $n$}. By induction assume that $\Sigma(\mathfrak{h})$ is a Kan simplicial set for any nilpotent $L_\infty$-algebra $\mathfrak{h}$ of length at most $n-1$.

Since $\mathfrak{g}$ is nilpotent of length $n$, it follows that $F^n\mathfrak{g} = Gr^n\mathfrak{g}$ is central in $\mathfrak{g}$ and $\mathfrak{g}/F^n\mathfrak{g}$ is nilpotent of length $n-1$. Therefore, $\Sigma(\mathfrak{g}/F^n\mathfrak{g}$ is a Kan simplicial set and so is $\Sigma(\mathfrak{g}/F^n\mathfrak{g})_0$. Since $\Sigma(\mathfrak{g}) \to \Sigma(\mathfrak{g}/F^n\mathfrak{g})_0$ is a Kan fibration it follows that $\Sigma(\mathfrak{g})$ is a Kan simplicial set as well.
\end{proof}

\begin{lemma}\label{lemma: vanishing of higher homotopy}
Suppose that $\mathfrak{g}$ is a nilpotent $L_\infty$-algebra such that $\mathfrak{g}^q =
0$ for $q\leq -k$, $k$ a positive integer. Then, for any connected component $X$ of $\Sigma(\mathfrak{g})$,  $\pi_i(X) = 0$ for $i>k$.
\end{lemma}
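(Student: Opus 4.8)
The plan is to prove this by induction on the nilpotency length of $\mathfrak{g}$, using the central-extension structure that has already been set up. The base case is the abelian one: if $\mathfrak{g}$ is abelian with $\mathfrak{g}^i = 0$ for $i \leq -k$, then by the Dold--Kan computation \eqref{abelian integration} we have $\pi_i\Sigma(\mathfrak{g}) \cong H^{1-i}(\mathfrak{g})$, and since $\mathfrak{g}$ is concentrated in degrees $\geq -k+1$ (note $\mathfrak{g}^{-k}=0$), the cohomology $H^{1-i}(\mathfrak{g})$ vanishes for $1-i < -k+1$, i.e. for $i > k$. So the base case is immediate.

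**For the inductive step,** I would take $\mathfrak{a} = F^{N-1}\mathfrak{g}$ where $N$ is chosen so that $F^N\mathfrak{g} = 0$; this is a central subalgebra, and $\mathfrak{g}/\mathfrak{a}$ has shorter nilpotency length and still vanishes in degrees $\leq -k$. As recalled just before Lemma \ref{lemma: sigma kan}, the map $\Sigma(\mathfrak{g}) \to \Sigma(\mathfrak{g}/\mathfrak{a})_0$ is a principal fibration with structure group the simplicial abelian group $\Sigma(\mathfrak{a})$. Moreover $\mathfrak{a}$ is abelian (being the last nonzero term of the lower central series), so $\pi_i\Sigma(\mathfrak{a}) \cong H^{1-i}(\mathfrak{a})$ and this vanishes for $i > k$ by the same degree argument as the base case, since $\mathfrak{a} \subset \mathfrak{g}$ is also concentrated in degrees $\geq -k+1$. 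Now I invoke the long exact sequence of homotopy groups of the fibration
\[
\cdots \to \pi_i\Sigma(\mathfrak{a}) \to \pi_i\Sigma(\mathfrak{g}) \to \pi_i\Sigma(\mathfrak{g}/\mathfrak{a})_0 \to \pi_{i-1}\Sigma(\mathfrak{a}) \to \cdots
\]
and observe that $\Sigma(\mathfrak{g}/\mathfrak{a})_0$ is a union of connected components of $\Sigma(\mathfrak{g}/\mathfrak{a})$, so its higher homotopy groups agree with those of $\Sigma(\mathfrak{g}/\mathfrak{a})$, which vanish above degree $k$ by the inductive hypothesis. For $i > k$ we then have $\pi_i\Sigma(\mathfrak{a}) = 0$ on the left and $\pi_i\Sigma(\mathfrak{g}/\mathfrak{a})_0 = 0$ in the middle-right, forcing $\pi_i\Sigma(\mathfrak{g}) = 0$.

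**One point that needs a little care** is the basepoint: homotopy groups are taken relative to a chosen Maurer--Cartan element, and I should note that the fibration and the long exact sequence are available at every basepoint (the action of $\Sigma(\mathfrak{a})$ is principal, hence the fibration statement is basepoint-independent), and that $\Sigma(\mathfrak{g}/\mathfrak{a})_0$ being a union of components containing the image means every basepoint of $\Sigma(\mathfrak{g})$ maps into it. A second small point is checking that $\mathfrak{g}/\mathfrak{a}$ indeed still satisfies $(\mathfrak{g}/\mathfrak{a})^i = 0$ for $i \leq -k$, which is clear since quotienting by a graded subspace only removes elements.

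**The main obstacle** I anticipate is purely bookkeeping rather than conceptual: making sure the induction is set up on the correct invariant (nilpotency length, with the abelian case as genuine base) and that at each stage the subalgebra $\mathfrak{a}$ extracted from the lower central series is simultaneously central, abelian, and correctly bounded in degree — all of which follow from the definition of $F^\bullet\mathfrak{g}$ but should be stated cleanly. No hard analysis or new construction is required; everything rests on Lemma \ref{lemma: sigma kan}'s fibration, the Dold--Kan computation, and the long exact sequence.
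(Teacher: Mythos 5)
Your proof is correct and follows essentially the same route as the paper: the abelian base case via $\pi_i\Sigma(\mathfrak{a}) \cong H^{1-i}(\mathfrak{a})$, followed by induction on nilpotency length using the long exact sequence of the principal fibration associated to the central extension by the last nonzero term of the lower central series (the paper writes this as $\Sigma(Gr^i_F\mathfrak{g}) \to \Sigma(\mathfrak{g}/F^{i+1}\mathfrak{g}) \to \Sigma(\mathfrak{g}/F^i\mathfrak{g})$, which is the same induction indexed the other way). Your added care about basepoints and about $\Sigma(\mathfrak{g}/\mathfrak{a})_0$ being a union of components is a reasonable elaboration of details the paper leaves implicit.
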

\begin{proof}
Suppose that $\mathfrak{g}$ is abelian. Then, $\pi_i\Sigma(\mathfrak{g}) \cong
H^{1-i}(\mathfrak{g})$. For $\mathfrak{g}$ not necessarily abelian the statement follows by induction on the nilpotency length, the abelian case establishing the base of the induction.

Let $F^\bullet\mathfrak{g}$ denote the lower central series. Assume that $Gr^i_F\mathfrak{g} \neq 0$ if and only if $0\leq i \leq n$; that is, $\mathfrak{g}$ is \emph{nilpotent of length $n$}. By induction assume that the conclusion holds for all nilpotent $L_\infty$-algebras of length at most $n-1$.

Since $\mathfrak{g}$ is nilpotent of length $n$, it follows that $F^n\mathfrak{g} = Gr^n\mathfrak{g}$ is central in $\mathfrak{g}$ and $\mathfrak{g}/F^n\mathfrak{g}$ is nilpotent of length $n-1$. Let $X\subseteq\Sigma(\mathfrak{g})$ be a connected component of $\Sigma(\mathfrak{g})$ and let $Y\subseteq\Sigma(\mathfrak{g}/F^n\mathfrak{g})$ be the image of $X$ under the map induced by the quotient map $\mathfrak{g} \to \mathfrak{g}/F^n\mathfrak{g}$. Then, $X \to Y$ is a principal fibration with group the connected component of the identity in $\Sigma(F^n\mathfrak{g})$. The desired vanishing of higher homotopy groups of $X$ follows from the induction hypotheses using the long exact sequence of homotopy groups.
\end{proof}

\subsubsection{Homotopy invariance}

\begin{lemma}\label{lemma: quism ab alg equiv on sigma}
Suppose that $f \colon \mathfrak{a} \to \mathfrak{b}$ is a quasi-isomorphism of abelian
algebras. Then, the induced map $\Sigma(f) \colon \Sigma(\mathfrak{a}) \to
\Sigma(\mathfrak{b})$ is an equivalence.
\end{lemma}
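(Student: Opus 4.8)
The plan is to reduce the statement for general abelian (i.e. arbitrary complex, possibly unbounded) $L_\infty$-algebras to the well-understood case of the Dold–Kan correspondence, using the homotopy equivalence \eqref{abelian integration}. Recall that for an abelian algebra $\mathfrak{a}$ the simplicial set $\Sigma(\mathfrak{a})$ is the simplicial abelian group $n \mapsto Z^1(\Omega_n\otimes\mathfrak{a}) = Z^0(\Omega_n\otimes\mathfrak{a}[1])$, and that the integration map induces a homotopy equivalence $\int\colon \Sigma(\mathfrak{a}) \isomoto K(\mathfrak{a}[1])$ of simplicial abelian groups, where $K$ is the Dold–Kan functor. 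Since $\int$ is natural in $\mathfrak{a}$, a quasi-isomorphism $f\colon\mathfrak{a}\to\mathfrak{b}$ fits into a commutative square whose vertical arrows are the equivalences \eqref{abelian integration} for $\mathfrak{a}$ and $\mathfrak{b}$ and whose bottom arrow is $K(f[1])\colon K(\mathfrak{a}[1])\to K(\mathfrak{b}[1])$. By the two-out-of-three property of homotopy equivalences, it therefore suffices to show that $K(f[1])$ is a homotopy equivalence of simplicial sets.

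For the latter I would invoke the standard fact that the Dold–Kan correspondence is an equivalence of categories between non-negatively graded chain complexes of abelian groups and simplicial abelian groups, and that under it the homotopy groups of $K(A)$ are the (co)homology groups of $A$; concretely $\pi_i K(\mathfrak{a}[1]) \cong H^{1-i}(\mathfrak{a})$, as already recorded after \eqref{abelian integration}. Hence if $f$ is a quasi-isomorphism, $K(f[1])$ induces an isomorphism on all homotopy groups. A map of simplicial abelian groups — in particular of Kan complexes — that is a weak homotopy equivalence is a homotopy equivalence (for instance because simplicial abelian groups are fibrant and cofibrant in the relevant model structure, or by a direct argument building a simplicial homotopy inverse degreewise from the chain-level homotopy inverse produced by Dold–Kan). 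One small point to be careful about: $\mathfrak{a}[1]$ may have components in negative chain degrees (corresponding to $\mathfrak{a}^i$ for $i\geq 2$), so one should either truncate harmlessly — note $Z^0(\Omega_n\otimes\mathfrak{a}[1])$ only sees $\mathfrak{a}^{\leq 1}$ together with the differential into $\mathfrak{a}^2$, so $\Sigma(\mathfrak{a})$ depends only on the stupid truncation $\sigma_{\leq 2}\mathfrak{a}$ — or simply apply Dold–Kan to the good truncation $\tau_{\leq 1}$ of $\mathfrak{a}[1]$, which has the same low-degree (co)homology and hence the same $K$ up to equivalence.

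I expect the only genuine content to be the bookkeeping around truncations: one must check that passing to truncations does not alter $\Sigma$ up to equivalence and that the quasi-isomorphism $f$ is compatible with whichever truncation is chosen, so that the naturality square above really does commute. Everything else — the naturality of $\int$, the computation of homotopy groups of $K$, and the upgrade from weak equivalence to homotopy equivalence for simplicial abelian groups — is standard and can be cited. So the proof will be short: set up the naturality square for \eqref{abelian integration}, reduce to $K(f[1])$ by two-out-of-three, and conclude since $K$ sends quasi-isomorphisms (of appropriately truncated complexes) to homotopy equivalences of simplicial abelian groups.
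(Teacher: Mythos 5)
Your proposal is correct and is essentially the paper's argument: the paper's one-line proof simply invokes the natural identification $\pi_i\Sigma(\mathfrak{a}) \cong H^{1-i}(\mathfrak{a})$ coming from the integration equivalence \eqref{abelian integration}, which is exactly what your naturality square with $K(\mathfrak{a}[1])$ and the two-out-of-three step encode, followed by Whitehead for Kan complexes (simplicial abelian groups). The truncation bookkeeping you flag is harmless for the same reason you give, so no further work is needed.
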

\begin{proof}
Note that $\Sigma(f)$ is a morphism of simplicial abelian groups.
It is sufficient to show that the maps $\pi_n\Sigma(f)\colon \pi_n\Sigma(\mathfrak{a}) \to \pi_n\Sigma(\mathfrak{b})$ are isomorphisms for $n \geqslant 0$. To this end note that $\pi_n\Sigma(f)$ factors as the composition of isomorphisms
\[
\pi_n\Sigma(\mathfrak{a}) \cong H^{1-n}(\mathfrak{a}) \xrightarrow{H^{1-n}(\Sigma(f))} H^{1-n}(\mathfrak{b}) \cong \pi_n\Sigma(\mathfrak{b}) .
\]
\end{proof}

\begin{prop}[\cite{G1}, Proposition 4.9]\label{prop:homotopy invariance}
Suppose that $f \colon \mathfrak{g} \to \mathfrak{h}$ is a quasi-isomorphism of
$L_\infty$-algebras and $R$ is an Artin algebra with maximal ideal $\mathfrak{m}_R$. Then,
the map $\Sigma(f\otimes\id) \colon \Sigma(\mathfrak{g}\otimes\mathfrak{m}_R) \to
\Sigma(\mathfrak{h}\otimes\mathfrak{m}_R)$ is an equivalence.
\end{prop}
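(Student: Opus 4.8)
The plan is to reduce everything to the abelian case treated in Lemma~\ref{lemma: quism ab alg equiv on sigma} by filtering $\mathfrak m_R$ by its powers and using the principal fibrations of \ref{subsubsection: central extensions sigma}. Since $R$ is Artinian, $\mathfrak m_R$ is nilpotent, say $\mathfrak m_R^N = 0$. For a nilpotent $L_\infty$-algebra $\mathfrak g$ I would consider the tower of quotient $L_\infty$-algebras
\[
\mathfrak g\otimes\mathfrak m_R = \mathfrak g\otimes(\mathfrak m_R/\mathfrak m_R^N) \to \mathfrak g\otimes(\mathfrak m_R/\mathfrak m_R^{N-1}) \to \cdots \to \mathfrak g\otimes(\mathfrak m_R/\mathfrak m_R^2) .
\]
At the $k$-th stage the kernel of $\mathfrak g\otimes(\mathfrak m_R/\mathfrak m_R^{k+1}) \to \mathfrak g\otimes(\mathfrak m_R/\mathfrak m_R^{k})$ is $\mathfrak g\otimes(\mathfrak m_R^k/\mathfrak m_R^{k+1})$, which is a \emph{central} subalgebra (any bracket of valency $\geq 2$ with a factor in $\mathfrak m_R^k$ lands in $\mathfrak m_R^{k+1}$) and, being $\mathfrak g$ tensored with a vector space, is \emph{abelian}. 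Hence by \ref{subsubsection: central extensions sigma} the map
\[
\Sigma(\mathfrak g\otimes(\mathfrak m_R/\mathfrak m_R^{k+1})) \to \Sigma(\mathfrak g\otimes(\mathfrak m_R/\mathfrak m_R^{k}))_0
\]
is a principal Kan fibration with structure group the simplicial abelian group $\Sigma(\mathfrak g\otimes(\mathfrak m_R^k/\mathfrak m_R^{k+1}))$, and these towers for $\mathfrak g$ and for $\mathfrak h$ are compatible via $f$.

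Next I would run an induction on $k$. The base of the tower, $\mathfrak g\otimes(\mathfrak m_R/\mathfrak m_R^2)$, is abelian, and $f\otimes\id$ is a quasi-isomorphism there (tensoring a quasi-isomorphism of complexes with the fixed vector space $\mathfrak m_R/\mathfrak m_R^2$ stays a quasi-isomorphism), so $\Sigma(f\otimes\id)$ is an equivalence by Lemma~\ref{lemma: quism ab alg equiv on sigma}. For the inductive step, assume $\Sigma(f\otimes\id)\colon \Sigma(\mathfrak g\otimes(\mathfrak m_R/\mathfrak m_R^{k})) \to \Sigma(\mathfrak h\otimes(\mathfrak m_R/\mathfrak m_R^{k}))$ is an equivalence, and compare the two principal fibrations displayed above for $\mathfrak g$ and $\mathfrak h$. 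On the fibre, $f\otimes\id$ is a quasi-isomorphism of abelian algebras, so Lemma~\ref{lemma: quism ab alg equiv on sigma} applies. On the base, naturality of $o_2$ together with the fact that $f$ induces an isomorphism $H^2(\mathfrak g\otimes(\mathfrak m_R^k/\mathfrak m_R^{k+1})) \isomoto H^2(\mathfrak h\otimes(\mathfrak m_R^k/\mathfrak m_R^{k+1}))$ shows that $\Sigma(f\otimes\id)$ carries $\Sigma(\mathfrak g\otimes(\mathfrak m_R/\mathfrak m_R^{k}))_0$ exactly onto the preimage of $\Sigma(\mathfrak h\otimes(\mathfrak m_R/\mathfrak m_R^{k}))_0$; since these subspaces are unions of connected components and the ambient map is an equivalence, its restriction to the $o_2^{-1}(0)$ parts is still an equivalence. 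Comparing the long exact sequences of homotopy groups of the two principal fibrations and applying the five lemma then gives that $\Sigma(f\otimes\id)$ is an equivalence on $\Sigma(\mathfrak g\otimes(\mathfrak m_R/\mathfrak m_R^{k+1}))$, which closes the induction; taking $k+1 = N$ yields the proposition.

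The step I expect to need the most care is the bookkeeping at the bottom of the homotopy long exact sequences: the base of each fibration is $\Sigma(\cdots)_0$ rather than all of $\Sigma(\cdots)$, and $\pi_0$, $\pi_1$ are only pointed sets, so one must phrase the five-lemma argument using the action of $\pi_0$ of the (simplicial \emph{abelian}) structure group on the fibres, check exactness and the five-lemma conclusion at the level of pointed sets, and verify that basepoints can be chosen compatibly in $\mathfrak g$ and $\mathfrak h$ — equivalently, that the claim may be checked one connected component at a time, which is legitimate precisely because $\Sigma(f\otimes\id)$ is already a $\pi_0$-bijection at the previous stage. The remaining ingredients — nilpotence of $\mathfrak m_R$, centrality and abelianness of the subquotients $\mathfrak g\otimes(\mathfrak m_R^k/\mathfrak m_R^{k+1})$, and compatibility of the two towers under $f$ — are routine.
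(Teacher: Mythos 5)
Your proof is correct and follows essentially the same route as the paper: induction along the $\mathfrak m_R$-adic tower, identifying each layer $\mathfrak g\otimes(\mathfrak m_R^k/\mathfrak m_R^{k+1})$ as a central abelian subalgebra, invoking Lemma~\ref{lemma: quism ab alg equiv on sigma} on the fibres, using $o_2$ and the $H^2$-isomorphism to match the distinguished components of the bases, and comparing the resulting principal fibrations. The only cosmetic difference is that the paper peels off the top power $\mathfrak m_R^l$ in a single inductive step and states the final comparison of fibrations without unwinding the long exact sequences, whereas you build the tower from the bottom and spell out the five-lemma bookkeeping.
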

\begin{proof}
We use induction on the nilpotency length of $\mathfrak{m}_R$, which is to say the largest integer $l$ such that $\mathfrak{m}_R^l \neq 0$.

If $\mathfrak{m}_R^2 = 0$, then $f\otimes\id \colon \mathfrak{g}\otimes\mathfrak{m}_R \to \mathfrak{h}\otimes\mathfrak{m}_R$ is a quasi-isomorphism of abelian algebras and the claim follows from Lemma \ref{lemma: quism ab alg equiv on sigma}.

Suppose that $\mathfrak{m}_R^{l+1} = 0$. By the induction hypothesis
\begin{itemize}
\item the map
$\Sigma(\mathfrak{g}\otimes\mathfrak{m}_R/\mathfrak{m}_R^l) \to
\Sigma(\mathfrak{h}\otimes\mathfrak{m}_R/\mathfrak{m}_R^l)$ is an equivalence and \item the map
$\pi_0\Sigma(\mathfrak{g}\otimes\mathfrak{m}_R/\mathfrak{m}_R^l) \to
\pi_0\Sigma(\mathfrak{h}\otimes\mathfrak{m}_R/\mathfrak{m}_R^l)$ is a bijection.
\end{itemize}
The map $f\otimes\id_{\mathfrak{m}_R^l}$ is a quasi-isomorphism of abelian
$L_\infty$-algebras, therefore the map $H^2(\mathfrak{g}\otimes\mathfrak{m}_R^l) \to
H^2(\mathfrak{h}\otimes\mathfrak{m}_R^l)$ is an isomorphism. The commutativity of
\[
\begin{CD}
\pi_0\Sigma(\mathfrak{g}\otimes\mathfrak{m}_R/\mathfrak{m}_R^l) @>>> \pi_0\Sigma(\mathfrak{h}\otimes\mathfrak{m}_R/\mathfrak{m}_R^l) \\
@VVV @VVV \\
H^2(\mathfrak{g}\otimes\mathfrak{m}_R^l) @>>> H^2(\mathfrak{h}\otimes\mathfrak{m}_R^l)
\end{CD}
\]
implies that the map
\[
\pi_0\Sigma(\mathfrak{g}\otimes\mathfrak{m}_R/\mathfrak{m}_R^l)_0 \to \pi_0\Sigma(\mathfrak{h}\otimes\mathfrak{m}_R/\mathfrak{m}_R^l)_0
\]
is a bijection. Therefore, the map
\[
\Sigma(\mathfrak{g}\otimes\mathfrak{m}_R/\mathfrak{m}_R^l)_0 \to \Sigma(\mathfrak{h}\otimes\mathfrak{m}_R/\mathfrak{m}_R^l)_0
\]
is an equivalence. The map $\Sigma(f)$ restricts to a map of principal fibrations
\[
\begin{CD}
\Sigma(\mathfrak{g}\otimes\mathfrak{m}_R) @>>> \Sigma(\mathfrak{h}\otimes\mathfrak{m}_R) \\
@VVV @VVV \\
\Sigma(\mathfrak{g}\otimes\mathfrak{m}_R/\mathfrak{m}_R^l)_0  @>>> \Sigma(\mathfrak{h}\otimes\mathfrak{m}_R/\mathfrak{m}_R^l)_0
\end{CD}
\]
relative to the map of simplicial groups $\Sigma(\mathfrak{g}\otimes\mathfrak{m}_R^l)
\to \Sigma(\mathfrak{h}\otimes\mathfrak{m}_R^l)$. The latter is an equivalence by Lemma
\ref{lemma: quism ab alg equiv on sigma}. Therefore, so is the map
$\Sigma(\mathfrak{g}\otimes\mathfrak{m}_R) \to
\Sigma(\mathfrak{h}\otimes\mathfrak{m}_R)$.
\end{proof}

\subsection{Deligne groupoids}\label{ss: Deligne}

\subsubsection{Gauge transformations}
Suppose that $\mathfrak{h}$ is a nilpotent DGLA. Then, $\mathfrak{h}^0$ is a nilpotent Lie algebra. The unipotent group $\exp \mathfrak{h}^0$ acts on the space $\mathfrak{h}^1$ by affine transformations. The action of $\exp X$, $X\in\mathfrak{h}^0$, on $\gamma\in\mathfrak{h}^1$ is given by the formula
\begin{equation}\label{gauge transformation}
(\exp X) \cdot \gamma= \gamma- \sum_{i=0}^{\infty} \frac{(\ad
X)^i}{(i+1)!}(\delta X + [\gamma, X]) .
\end{equation}
The effect of the above action on the curvature $\mathcal{F}(\gamma) = \delta\gamma + \dfrac12 [\gamma,\gamma]$ is given by
\begin{equation}\label{gauge transformation - curva}
\mathcal{F}((\exp X) \cdot \gamma) = \exp(\ad X)(\mathcal{F}(\gamma)) .
\end{equation}

\subsubsection{The functor $\MC^1$}\label{ss:MC1}
Suppose that $\mathfrak{h}$ is a nilpotent DGLA.
It follows from \eqref{gauge transformation - curva} that gauge transformations \eqref{gauge transformation} preserve the subset of Maurer-Cartan elements $\MC(\mathfrak{h})\subset\mathfrak{h}^1$.

We denote by $\MC^1(\mathfrak{h})$ the Deligne groupoid
(denoted $\mathcal{C}(\mathfrak{h})$ in \cite{H1}) defined as the groupoid associated with the action of the group $\exp \mathfrak{h}^0$ by gauge
transformations on the set $\MC(\mathfrak{h})$.

Thus, $\MC^1(\mathfrak{h})$ is the category with the set of objects $\MC(\mathfrak{h})$. For $\gamma_1, \gamma_2 \in \MC(\mathfrak{h})$, $\Hom_{\MC^1(\mathfrak{h})}(\gamma_1, \gamma_2)$ is the set of gauge transformations between $\gamma_1$, $\gamma_2$. The composition
\begin{equation*}
\Hom_{\MC^1(\mathfrak{h})}(\gamma_2,
\gamma_3)\times\Hom_{\MC^1(\mathfrak{h})}(\gamma_1,
\gamma_2)\to\Hom_{\MC^1(\mathfrak{h})}(\gamma_1, \gamma_3)
\end{equation*}
is given by  the product in the group $\exp(\mathfrak{h}^0)$.

\subsubsection{The functor $\MC^2$}\label{ss:MC2}
For $\mathfrak{h}$ as above satisfying the additional vanishing condition $\mathfrak{h}^i
= 0$ for $i < -1$ we denote by $\MC^2(\mathfrak{h})$ the Deligne $2$-groupoid as
defined by P.~Deligne \cite{Del} and independently by E.~Getzler, \cite{G1}. Below we review the construction of Deligne $2$-groupoid of a nilpotent DGLA following \cite{G1, G2} and references therein.

The objects and the $1$-morphisms of $\MC^2(\mathfrak{h})$ are those of $\MC^1(\mathfrak{h})$. That is, for $\gamma_1, \gamma_2 \in \MC(\mathfrak{h})$ the set $\Hom_{\MC^1(\mathfrak{h})}(\gamma_1, \gamma_2)$ is the set of objects of the groupoid   $\Hom_{\MC^2(\mathfrak{h})}(\gamma_1, \gamma_2)$. The morphisms in $\Hom_{\MC^2(\mathfrak{h})}(\gamma_1, \gamma_2)$ (i.e. the $2$-morphisms of $\MC^2(\mathfrak{h})$) are defined as follows.

For $\gamma \in \MC(\mathfrak{h})$ let $[\cdot, \cdot]_{\gamma}$ denote the Lie bracket on $\mathfrak{h}^{-1}$ defined by
\begin{equation}\label{eq:mu-bracket}
[a,\,b]_{\gamma}=[a,\, \delta b+[\gamma, \,b]].
\end{equation}
Equipped with this bracket, $\mathfrak{h}^{-1}$ becomes a nilpotent Lie
algebra. We denote by $\exp_{\gamma} \mathfrak{h}^{-1}$ the
corresponding unipotent group, and by
\[
\exp_{\gamma} \colon \mathfrak{h}^{-1} \to \exp_{\gamma}\mathfrak{h}^{-1}
\]
the corresponding exponential map. If $\gamma_1$, $\gamma_2$ are two Maurer-Cartan
elements, then the group $\exp_{\gamma_2} \mathfrak{h}^{-1}$ acts on
$\Hom_{\MC^1(\mathfrak{h})}(\gamma_1, \gamma_2)$. For $\exp_{\gamma_2} t
\in \exp_{\gamma_2} \mathfrak{h}^{-1}$ and $\Hom_{\MC^1(\mathfrak{h})}(\gamma_1, \gamma_2)$ the action is given by
\begin{equation*}
(\exp_{\gamma_2} t) \cdot (\exp X) = 
\exp(\delta t+[\gamma_2,t])\, \exp X \in \exp
\mathfrak{h}^0 .
\end{equation*}
By definition, $\Hom_{\MC^2(\mathfrak{h})}(\gamma_1, \gamma_2)$ is the groupoid associated with the above action.


The horizontal composition in $\MC^2(\mathfrak{h})$, i.e. the map of groupoids
\begin{multline*}
\otimes: \Hom_{\MC^2(\mathfrak{h})}(\exp X_{23}, \exp Y_{23}) \times
\Hom_{\MC^2(\mathfrak{h})}(\exp X_{12}, \exp Y_{12}) \to\\
\Hom_{\MC^2(\mathfrak{h})}(\exp X_{23}\exp X_{12}, \exp X_{23}\exp
Y_{12}) ,
\end{multline*}
where $\gamma_i \in \MC(\mathfrak{h})$, $\exp X_{ij}, \exp Y_{ij}$, $1\leq i,j\leq 3$ is defined by
\[
\exp_{\gamma_{3}} t_{23} \otimes \exp_{\gamma_{2}} t_{12}
=\exp_{\gamma_{3}} t_{23}\exp_{\gamma_3}( \exp(\ad X_{23})(t_{12}) ) ,
\]
where $\exp_{\gamma_j} t_{ij} \in \Hom_{\MC^2(\mathfrak{h})}(\exp X_{ij}, \exp Y_{ij})$.

\begin{remark}\label{remark: MC1 to MC2}
There is a canonical map of $2$-groupoids $\MC^1(\mathfrak{h}) \to
\MC^2(\mathfrak{h})$ which induces a bijection $\pi_0(\MC^1(\mathfrak{h})) \to
\pi_0(\MC^2(\mathfrak{h}))$ on sets of isomorphism classes of objects.
\end{remark}

\subsection{Properties of $\mathfrak{N}\MC^2$}

\subsubsection{Abelian algebras}

\begin{lemma}\label{nerve mc2 is eilenberg-maclane for abelian}
Suppose that $\mathfrak{a}$ is an abelian DGLA satisfying $\mathfrak{a}^i = 0$ for $i<
-1$. Then, the simplicial sets $\mathfrak{N}\MC^2(\mathfrak{a})$ and
$K(\mathfrak{a}[1])$ are isomorphic naturally in $\mathfrak{a}$.
\end{lemma}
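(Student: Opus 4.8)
The plan is to make both simplicial sets completely explicit and to observe that they carry identical data. First I would unwind $\MC^2(\mathfrak a)$ in the abelian case: when $\mathfrak a$ is abelian the $\gamma$-bracket of \eqref{eq:mu-bracket} vanishes, so $\exp(\mathfrak a^0)$ and $\exp_\gamma\mathfrak a^{-1}$ are just the additive groups $\mathfrak a^0$ and $\mathfrak a^{-1}$; the gauge action of \eqref{eq:MC equivalence} becomes $\gamma\mapsto\gamma-\delta X$; the action of $\exp_{\gamma_2}\mathfrak a^{-1}$ on $1$-morphisms becomes $X\mapsto X+\delta t$; and both the composition of $1$-morphisms and the horizontal composition of $2$-morphisms become addition. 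Hence $\MC^2(\mathfrak a)$ is the strict $2$-groupoid with object set $Z^1(\mathfrak a)$, in which $\Hom(\gamma_1,\gamma_2)$ is the groupoid with objects $\{X\in\mathfrak a^0:\delta X=\gamma_1-\gamma_2\}$ and with morphisms $X\to Y$ the set $\{t\in\mathfrak a^{-1}:\delta t=Y-X\}$, all compositions being $+$; the hypothesis $\mathfrak a^i=0$ for $i\le-2$ is precisely what makes this an honest $2$-groupoid.

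Next I would feed this into Lemma \ref{lemma:the two gothic Ns}: an $n$-simplex of $\mathfrak N\MC^2(\mathfrak a)$ is then a triple $((\mu_i),(g_{ij}),(c_{ijk}))$ with $\mu_i\in Z^1(\mathfrak a)$, $g_{ij}\in\mathfrak a^0$ satisfying $\delta g_{ij}=\mu_j-\mu_i$, and $c_{ijk}\in\mathfrak a^{-1}$ satisfying $\delta c_{ijk}=g_{ik}-g_{ij}-g_{jk}$, subject to the additive identity $c_{ijk}-c_{ijl}+c_{ikl}-c_{jkl}=0$ (the abelianization of $c_{ijl}c_{jkl}=c_{ikl}c_{ijk}$), with structure maps $f^*((\mu_i),(g_{ij}),(c_{ijk}))=((\mu_{f(i)}),(g_{f(i)f(j)}),(c_{f(i)f(j)f(k)}))$ for $f\colon[m]\to[n]$ (read with $g_{aa}=0$, $c_{aab}=c_{abb}=0$). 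On the other side, since $\mathfrak a[1]^0=\mathfrak a^1$, $\mathfrak a[1]^{-1}=\mathfrak a^0$, $\mathfrak a[1]^{-2}=\mathfrak a^{-1}$ and $\mathfrak a[1]^i=0$ for $i\le-3$, a total-degree-$0$ (normalized) simplicial cochain on $\Delta^n$ with coefficients in $\mathfrak a[1]$ is a triple of exactly the same shape, and the cocycle condition $d_{\mathrm{tot}}=0$ unwinds slot by slot into $\delta\mu_i=0$ (the $\mathfrak a^2$-slot, the only place $\mathfrak a^{\ge2}$ enters), $d_{\mathrm{simp}}\mu=\pm\delta g$, $d_{\mathrm{simp}}g=\pm\delta c$, $d_{\mathrm{simp}}c=0$, while the structure maps of $K(\mathfrak a[1])$ are pullback of cochains along $\Delta^f$, given by the same index-substitution formula. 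Thus the two descriptions of the $n$-simplices coincide.

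It then remains to package this as an isomorphism: after fixing one uniform sign convention for $g$ and $c$ reconciling the alternating-sum coboundary with the Getzler composition formulas, the identity on such triples is a degreewise bijection $\mathfrak N\MC^2(\mathfrak a)\xrightarrow{\ \sim\ }K(\mathfrak a[1])$ commuting with all faces and degeneracies, and it is visibly natural in $\mathfrak a$ since every construction used is functorial. The real content is purely bookkeeping: pinning down the signs in the coboundary and in the identity $c_{ijl}c_{jkl}=c_{ikl}c_{ijk}$ so that the match is literal, and checking that the normalization conditions on the $K$-side correspond to the conventions $g_{aa}=\mathrm{id}$, $c_{aab}=c_{abb}=\mathrm{id}$ on the nerve side; I expect this sign/normalization reconciliation to be the only obstacle. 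As a conceptual cross-check, note that $K(\mathfrak a[1])$ depends only on the canonical truncation $\tau^{\le0}\mathfrak a[1]=[\mathfrak a^{-1}\xrightarrow{\delta}\mathfrak a^0\xrightarrow{\delta}Z^1(\mathfrak a)]$, which is precisely the length-two chain complex whose associated strict abelian $2$-groupoid is $\MC^2(\mathfrak a)$, so the statement is an instance of the abelian Dold–Kan correspondence.
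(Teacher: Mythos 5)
Your proposal is correct and follows exactly the route the paper takes: its proof simply declares the lemma an immediate consequence of the definitions together with the explicit description of $\mathfrak{N}\MC^2(\mathfrak{a})$ from Lemma \ref{lemma:the two gothic Ns}, which is precisely the unwinding you carry out in detail. The only content beyond what the paper records is your explicit sign/normalization bookkeeping, which is the right thing to check and raises no difficulty.
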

\begin{proof}
The claim is an immediate consequence of the definitions and the explicit description of the nerve of $\MC^2(\mathfrak{a})$ given in Lemma \ref{lemma:the two gothic Ns}.
\end{proof}

Combining Lemma \ref{nerve mc2 is eilenberg-maclane for abelian} with the integration map
\eqref{abelian integration} we obtain the map of simplicial abelian groups
\begin{equation}\label{sigma int nerve mc2 abelian}
\int\colon \Sigma(\mathfrak{a}) \to \mathfrak{N}\MC^2(\mathfrak{a})
\end{equation}
which is a homotopy equivalence.

\subsubsection{Central extensions}
Suppose that $\mathfrak{g}$ is a nilpotent DGLA satisfying $\mathfrak{g}^i = 0$ for $i< -1$ and $\mathfrak{a}$ is a central subalgebra in $\mathfrak{g}$. Note that $\MC^2$ commutes with products, $\mathfrak{N}$ commutes with products and the addition map $+\colon \mathfrak{a}\times\mathfrak{g} \to \mathfrak{g}$ is a morphism of DGLA.
Thus, we obtain an action of the simplicial abelian group
$\mathfrak{N}\MC^2(\mathfrak{a})$ on the simplicial set
$\mathfrak{N}\MC^2(\mathfrak{g})$
\[
\mathfrak{N}\MC^2(+)\colon \mathfrak{N}\MC^2(\mathfrak{a})\times\mathfrak{N}\MC^2(\mathfrak{g}) \to \mathfrak{N}\MC^2(\mathfrak{g}) .
\]
Note that the group structure on $\mathfrak{N}\MC^2(\mathfrak{a})$ is obtained from the case $\mathfrak{a} = \mathfrak{g}$. Clearly, the action is free and the map
$\mathfrak{N}\MC^2(\mathfrak{g}) \to
\mathfrak{N}\MC^2(\mathfrak{g}/\mathfrak{a})$ factors through
$\mathfrak{N}\MC^2(\mathfrak{g})/\mathfrak{N}\MC^2(\mathfrak{a})$.

\subsubsection{The obstruction map}
\begin{lemma}\label{lemma: iso invariance of obst and gemuse}
The obstruction map \eqref{eqn:o2} factors as
\[
\MC(\mathfrak{g}/\mathfrak{a}) \to \pi_0\MC^2(\mathfrak{g}/\mathfrak{a}) \to H^2(\mathfrak{a})
\]
\end{lemma}
\begin{proof}
Suppose $\mu+\mathfrak{a}^1 \in \MC(\mathfrak{g}/\mathfrak{a})$. It follows from the formula \eqref{gauge transformation} that
\[
\exp(X+\mathfrak{a}^0)\cdot(\mu+\mathfrak{a}^1) = (\exp X)\cdot\mu + \mathfrak{a}^1 .
\]
The formula \eqref{gauge transformation - curva} implies  that
\[
\mathcal{F}(\exp(X+\mathfrak{a}^0)\cdot(\mu+\mathfrak{a}^1)) = \mathcal{F}((\exp X)\cdot\mu) + \delta\mathfrak{a}^1 = \exp(\ad X)(\mathcal{F}(\mu) + \delta\mathfrak{a}^1) .
\]
Since $\mathcal{F}(\mu) + \delta\mathfrak{a}^1 \subset \mathfrak{a}^2$, it follows that $\exp(\ad X)(\mathcal{F}(\mu) + \delta\mathfrak{a}^1) = \mathcal{F}(\mu) + \delta\mathfrak{a}^1$ or, equivalently, $o_2(\exp(X+\mathfrak{a}^0)\cdot(\mu+\mathfrak{a}^1)) = o_2(\mu+\mathfrak{a}^1)$.
\end{proof}

Recall (Lemma \ref{lemma:the two gothic Ns}) that an $n$-simplex of $\mathfrak{N}\MC^2(\mathfrak{g}/\mathfrak{a})$, i.e. an element of $\mathfrak{N}_n\MC^2(\mathfrak{g}/\mathfrak{a})$ includes, among other things, a collection of $n+1$ gauge-equivalent Maurer-Cartan elements of $\mathfrak{g}/\mathfrak{a}$. By Lemma \ref{lemma: iso invariance of obst and gemuse} all of these Maurer-Cartan elements give rise to the same element of $H^2(\mathfrak{a})$ under the map \eqref{eqn:o2}. Therefore, the assignment of this common value to an element of $\mathfrak{N}_n\MC^2(\mathfrak{g}/\mathfrak{a})$ give rise to a well-defined map
\begin{equation}\label{eq: Obst und Gemuse 2}
o_{2,n}\colon \mathfrak{N}_n\MC^2(\mathfrak{g}/\mathfrak{a}) \to H^2(\mathfrak{a})
\end{equation}
for each $n = 0, 1, 2, \ldots$ such that the sequence of pointed sets
\[
0 \to \mathfrak{N}_n\MC^2(\mathfrak{g})/\mathfrak{N}_n\MC^2(\mathfrak{a}) \to \mathfrak{N}_n\MC^2(\mathfrak{g}/\mathfrak{a}) \xrightarrow{o_{2,n}} H^2(\mathfrak{a})
\]
is exact. The maps \eqref{eq: Obst und Gemuse 2} assemble into a map of simplicial sets
\[
o_2 \colon \mathfrak{N}\MC^2(\mathfrak{g}/\mathfrak{a}) \xrightarrow{o_2} H^2(\mathfrak{a}) ,
\]
where $H^2(\mathfrak{a})$ is constant. Let
$\mathfrak{N}\MC^2(\mathfrak{g}/\mathfrak{a})_0 = o_2^{-1}(0)$. The simplicial subset
$\mathfrak{N}\MC^2(\mathfrak{g}/\mathfrak{a})_0$ is a union of connected
components of $\mathfrak{N}\MC^2(\mathfrak{g}/\mathfrak{a})$ equal to the range of
the map $\mathfrak{N}\MC^2(\mathfrak{g})/\mathfrak{N}\MC^2(\mathfrak{a}) \to
\mathfrak{N}\MC^2(\mathfrak{g}/\mathfrak{a})$.

It follows that $\mathfrak{N}\MC^2(\mathfrak{g}) \to
\mathfrak{N}\MC^2(\mathfrak{g}/\mathfrak{a})_0$ is a principal fibration with the group
$\mathfrak{N}\MC^2(\mathfrak{a})$.

\section{$\mathfrak{N}\MC^2$ vs. $\Sigma$}\label{section: mc vs sigma}
In this section we show that for a DGLA $\mathfrak{h}$ satisfying $\mathfrak{h}^i = 0$ for $i< -1$ the simplicial sets $\mathfrak{N}\MC^2(\mathfrak{h})$ and $\Sigma(\mathfrak{h})$ are isomorphic in the homotopy category of simplicial sets.

\subsection{The main theorem}
Let $\mathbf{\Sigma}^2_n(\mathfrak{h}) = \widetilde{\MC^2(\Omega_n\otimes\mathfrak{h})}$, where the latter is the simplicial groupoid associated with the strict 2-groupoid $\MC^2(\Omega_n\otimes\mathfrak{h})$ (see \ref{sss:FStSG}).
Let $\mathbf{\Sigma}^2(\mathfrak{h}) \colon [n] \mapsto
\mathbf{\Sigma}^2_n(\mathfrak{h})$ denote the corresponding simplicial object in
simplicial groupoids. Note that $\Sigma(\mathfrak{h})$ is the simplicial set of objects of
$\mathbf{\Sigma}^2(\mathfrak{h})$, hence there is a canonical map
\begin{equation}\label{Sigma to simpl nerve SIGMA-2}
\Sigma(\mathfrak{h}) \to \mathfrak{N}\mathbf{\Sigma}^2(\mathfrak{h}) .
\end{equation}

The map $\mathbb{Q} \to \Omega_\bullet$ of simplicial DGA induces the map of simplicial objects in simplicial groupoids
\begin{equation}\label{MC2 to SIGMA-2}
\MC^2(\mathfrak{h}) \to \mathbf{\Sigma}^2(\mathfrak{h}) .
\end{equation}

Consider the diagram
\begin{equation}\label{Sigma to simpl nerve SIGMA-2 from MC2}
\begin{CD}
\Sigma(\mathfrak{h}) @>{\eqref{Sigma to simpl nerve SIGMA-2}}>> \mathfrak{N}\mathbf{\Sigma}^2(\mathfrak{h}) @<{\mathfrak{N}\eqref{MC2 to SIGMA-2}}<< \mathfrak{N}\MC^2(\mathfrak{h}) .
\end{CD}
\end{equation}

\begin{thm}\label{thm: nerve is sigma}
Suppose that $\mathfrak{h}$ is a nilpotent DGLA satisfying $\mathfrak{h}^i = 0$ for $i < -1$. Then, the morphisms \eqref{Sigma to simpl nerve SIGMA-2} and $\mathfrak{N}\eqref{MC2 to SIGMA-2}$ are equivalences so that the diagram \eqref{Sigma to simpl nerve SIGMA-2 from MC2} represents an
isomorphism  $\Sigma(\mathfrak{h})\cong\mathfrak{N}\MC^2(\mathfrak{h})$ in the
homotopy category of simplicial sets.
\end{thm}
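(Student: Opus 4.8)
The plan is to prove the theorem by reducing it, via the lower central series of $\mathfrak{h}$, to the abelian case, where both sides are identified with the same Eilenberg--MacLane object. Concretely, I would argue by induction on the nilpotency length. The base case is $\mathfrak{h}$ abelian, where Lemma~\ref{nerve mc2 is eilenberg-maclane for abelian} identifies $\mathfrak{N}\MC^2(\mathfrak{a})$ with $K(\mathfrak{a}[1])$ and \eqref{abelian integration} identifies $\Sigma(\mathfrak{a})$ with $K(\mathfrak{a}[1])$; the content is that the composite through $\mathfrak{N}\mathbf{\Sigma}^2(\mathfrak{a})$ in the diagram \eqref{Sigma to simpl nerve SIGMA-2 from MC2} is compatible with these identifications, which follows from naturality of the integration map $\int\colon\Omega_n\otimes\mathfrak{a}\to C^\bullet([n];\mathfrak{a})$ together with the fact that $\mathfrak{N}\mathbf{\Sigma}^2(\mathfrak{a})$ is itself a simplicial abelian group and \eqref{MC2 to SIGMA-2}, \eqref{Sigma to simpl nerve SIGMA-2} are maps of simplicial abelian groups. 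In particular the right-hand arrow $\mathfrak{N}\eqref{MC2 to SIGMA-2}$ is a weak equivalence in the abelian case, and the left-hand arrow is too.

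For the inductive step I would pick $i$ with $F^{i+1}\mathfrak{h}=0$, set $\mathfrak{a}=Gr^i_F\mathfrak{h}=F^i\mathfrak{h}$, which is a central subalgebra, and work with $\mathfrak{g}/\mathfrak{a}=\mathfrak{h}/F^i\mathfrak{h}$, which has smaller nilpotency length. The key structural inputs are already assembled in the excerpt: on the $\Sigma$ side, \S\ref{subsubsection: central extensions sigma} exhibits $\Sigma(\mathfrak{h})\to\Sigma(\mathfrak{h}/\mathfrak{a})_0$ as a principal fibration with group $\Sigma(\mathfrak{a})$, and on the $\mathfrak{N}\MC^2$ side the last paragraph of \S\ref{subsection: mc vs sigma} (before the theorem) exhibits $\mathfrak{N}\MC^2(\mathfrak{h})\to\mathfrak{N}\MC^2(\mathfrak{h}/\mathfrak{a})_0$ as a principal fibration with group $\mathfrak{N}\MC^2(\mathfrak{a})$. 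The same constructions applied to $\Omega_n\otimes\mathfrak{h}$ make $\mathfrak{N}\mathbf{\Sigma}^2(\mathfrak{h})\to\mathfrak{N}\mathbf{\Sigma}^2(\mathfrak{h}/\mathfrak{a})_0$ a principal fibration with group $\mathfrak{N}\mathbf{\Sigma}^2(\mathfrak{a})$, and the maps \eqref{Sigma to simpl nerve SIGMA-2}, \eqref{MC2 to SIGMA-2} are compatible with all three fibration structures and with the identifications of the structure groups (since $o_2$ is natural and the $o_2^{-1}(0)$ loci correspond). So the diagram \eqref{Sigma to simpl nerve SIGMA-2 from MC2} becomes a morphism of two towers of principal fibrations; by the inductive hypothesis it is a (zig-zag) equivalence on the base, and by the abelian case it is an equivalence on the fibers $\Sigma(\mathfrak{a})$, $\mathfrak{N}\mathbf{\Sigma}^2(\mathfrak{a})$, $\mathfrak{N}\MC^2(\mathfrak{a})$. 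A five-lemma argument with the long exact sequences of homotopy groups of these Kan fibrations — or, more cleanly, the fact that a map of principal $G$-fibrations which is an equivalence on base and an equivalence on structure groups is an equivalence on total spaces — then yields that both arrows of \eqref{Sigma to simpl nerve SIGMA-2 from MC2} are weak equivalences for $\mathfrak{h}$, completing the induction.

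I should be careful about one technical point that is genuinely the crux: matching up the two principal fibration structures through the intermediate object $\mathfrak{N}\mathbf{\Sigma}^2(\mathfrak{h})$, i.e. checking that \eqref{Sigma to simpl nerve SIGMA-2} and $\mathfrak{N}\eqref{MC2 to SIGMA-2}$ are equivariant for the respective $\Sigma(\mathfrak{a})$- and $\mathfrak{N}\MC^2(\mathfrak{a})$-actions along the group homomorphisms $\Sigma(\mathfrak{a})\to\mathfrak{N}\mathbf{\Sigma}^2(\mathfrak{a})\leftarrow\mathfrak{N}\MC^2(\mathfrak{a})$, and that they carry the distinguished components $o_2^{-1}(0)$ to $o_2^{-1}(0)$. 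This is where naturality of all the constructions in \S\ref{subsubsection: central extensions MC} in the coefficient DGLA, and the observation (already noted in the excerpt) that the obstruction class in $H^2(\mathfrak{a})$ is insensitive to passing from $\mathbb{Q}$ to $\Omega_\bullet$ and to gauge/Deligne equivalence, gets used. The remaining homotopy-theoretic bookkeeping — that the relevant maps are Kan fibrations, invoking \cite{M}, Lemma~18.2, and running the long exact sequences — is routine once the equivariance and the base/fiber equivalences are in place.

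Finally, I would remark that the right-hand map $\mathfrak{N}\eqref{MC2 to SIGMA-2}\colon\mathfrak{N}\MC^2(\mathfrak{h})\to\mathfrak{N}\mathbf{\Sigma}^2(\mathfrak{h})$ is of independent interest: the induction above shows it is always a weak equivalence (no grading hypothesis beyond $\mathfrak{h}^i=0$ for $i\leq-2$ is needed for this half), so the real geometric content of the theorem is that the left-hand map $\Sigma(\mathfrak{h})\to\mathfrak{N}\mathbf{\Sigma}^2(\mathfrak{h})$ is a weak equivalence, which is where the hypothesis that $\mathfrak{h}$ vanish below degree $-1$ enters — through Lemma~\ref{lemma: vanishing of higher homotopy} it guarantees that $\Sigma(\mathfrak{h})$ is a homotopy $2$-type, so that comparing it with the nerve of a $2$-groupoid-valued object is reasonable, and the base/fiber inductive comparison does not pick up phantom higher homotopy. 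I expect the main obstacle to be precisely the first point in the previous paragraph: organizing the three simplicial sets into a commuting diagram of principal fibrations with matching structure groups, so that the inductive step is a clean application of the fibration comparison principle rather than an ad hoc chase.
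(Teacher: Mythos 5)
Your strategy is sound, but it is not the route the paper takes for this theorem. The paper proves that \emph{each leg} of the zig-zag \eqref{Sigma to simpl nerve SIGMA-2 from MC2} is a weak equivalence separately, with no induction on nilpotency length: the left leg $\Sigma(\mathfrak{h})\to\mathfrak{N}\mathbf{\Sigma}^2(\mathfrak{h})$ is factored through $\mathcal{N}\mathbf{\Sigma}^1(\mathfrak{h})$ and $\mathcal{N}\mathbf{\Sigma}^2(\mathfrak{h})$, using contractibility of the simplicial gauge groups $G(\mathfrak{h})$ and $H(\mathfrak{h},\mu)$ (Hinich's Proposition 3.2.1 argument) together with the comparison of na\"{\i}ve and homotopy coherent nerves (Theorem \ref{thm: comparison of nerves}); the right leg is reduced to showing that $\MC^2(\mathfrak{h})\to\MC^2(\Omega_n\otimes\mathfrak{h})$ is an equivalence of $2$-groupoids for each $n$ (Proposition \ref{prop: g to g-omega on mc2 equiv}), which is done by a direct analysis of $\pi_0$ and of the Hom-groupoids $\Hom_{\MC^2}(\mu,\mu)$ using that $\mathfrak{h}\to\Omega_n\otimes\mathfrak{h}$ is a quasi-isomorphism of DGLAs concentrated in degrees $\geq -1$. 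Your central-extension induction is, in effect, the argument the paper reserves for Theorem \ref{thm:E=MC2} (the holonomy map), transplanted to the zig-zag; it buys uniformity and reuses the fibration machinery of \ref{subsubsection: central extensions MC}--\ref{subsubsection: central extensions sigma}, at the cost of having to set up a third compatible principal fibration structure on the intermediate object $\mathfrak{N}\mathbf{\Sigma}^2(\mathfrak{h})$ (levelwise in $n$, then passing to diagonals), whereas the paper's argument avoids obstruction theory here entirely and isolates the homotopy invariance of $\MC^2$ as a statement of independent interest.

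Two points in your write-up deserve attention. First, the abelian base case for the left leg is thinner than you suggest: knowing that $\Sigma(\mathfrak{a})$ and $\mathfrak{N}\mathbf{\Sigma}^2(\mathfrak{a})$ are simplicial abelian groups with isomorphic homotopy and that the map is a homomorphism does not yet show the map induces isomorphisms on $\pi_*$; you must actually identify the induced map $H^{1-i}(\mathfrak{a})\to H^{1-i}(\mathfrak{a})$ (e.g.\ via the Eilenberg--Zilber decomposition of the bisimplicial abelian group $([n],[p])\mapsto K_p((\Omega_n\otimes\mathfrak{a})[1])$, or by an explicit computation in low degrees) and check it is an isomorphism rather than, say, zero. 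This is doable but is a genuine verification, not a formal consequence of naturality. Second, your closing remark contrasts the hypothesis ``$\mathfrak{h}^i=0$ for $i\leq -2$'' with ``$\mathfrak{h}$ vanishes below degree $-1$''; these are the same condition, and it is needed on both legs already for $\MC^2$ and the $o_2$-formalism to be defined, so the dichotomy you draw there is illusory.
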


The rest of Section \ref{section: mc vs sigma} is devoted to a proof of Theorem
\ref{thm: nerve is sigma} which borrows techniques from the proof of Proposition 3.2.1 of
\cite{H4}.

\subsection{The map \eqref{Sigma to simpl nerve SIGMA-2} is an equivalence}
Let $\mathbf{\Sigma}^1(\mathfrak{h})$ denote the simplicial object in groupoids defined
by $\mathbf{\Sigma}^1_n(\mathfrak{h}) = \MC^1(\Omega_n\otimes\mathfrak{h})$.
Note that $\Sigma(\mathfrak{h})$ is the simplicial set of objects of
$\mathbf{\Sigma}^1(\mathfrak{h})$ and hence there is a canonical map
\begin{equation}\label{Sigma to nerve SIGMA-1}
\Sigma(\mathfrak{h}) \to \mathcal{N}\mathbf{\Sigma}^1(\mathfrak{h}) ;
\end{equation}
by Remark \ref{remark: MC1 to MC2} there is a canonical map of simplicial objects in
simplicial groupoids
\begin{equation}\label{Sigma-1 to SIGMA-2}
\mathbf{\Sigma}^1(\mathfrak{h}) \to \mathbf{\Sigma}^2(\mathfrak{h}) .
\end{equation}
The map \eqref{Sigma to simpl nerve SIGMA-2} is equal to the composition
\[
\Sigma(\mathfrak{h}) \xrightarrow{\eqref{Sigma to nerve SIGMA-1}} \mathcal{N}\mathbf{\Sigma}^1(\mathfrak{h}) \xrightarrow{\mathcal{N}\eqref{Sigma-1 to SIGMA-2} } \mathcal{N}\mathbf{\Sigma}^2(\mathfrak{h}) \to \mathfrak{N}\mathbf{\Sigma}^2(\mathfrak{h}) ,
\]
where the last map is the equivalence of Theorem \ref{thm: comparison of nerves}.

\begin{lemma}[\cite{H4}, Proposition 3.2.1]
The map \eqref{Sigma to nerve SIGMA-1} is an equivalence.
\end{lemma}
\begin{proof}
Let $G_n(\mathfrak{h}) := \exp((\Omega_n\otimes\mathfrak{h})^0)$. Then,
$G(\mathfrak{h}) \colon [n] \mapsto G_n(\mathfrak{h})$ is a simplicial group acting on
$\Sigma(\mathfrak{h})$, and $\mathbf{\Sigma}(\mathfrak{h})$ is the associated
groupoid. Therefore,
\[
N_q\mathbf{\Sigma}(\mathfrak{h}) = \Sigma(\mathfrak{h})\times G(\mathfrak{h})^{\times q}
\]
and the map
\[
\Sigma(\mathfrak{h}) \to N_q\mathbf{\Sigma}(\mathfrak{h})
\]
is an equivalence because $G(\mathfrak{h})$ is contractible.
\end{proof}

\begin{prop}
The map $\mathcal{N}\eqref{Sigma-1 to SIGMA-2}$ is an equivalence.
\end{prop}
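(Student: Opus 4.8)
The plan is to prove that the canonical map $\Sigma(\mathfrak{h}) \to \mathfrak{N}\mathbf{\Sigma}^2(\mathfrak{h})$ is a weak equivalence by induction on the nilpotency length of $\mathfrak{h}$, using the lower central series filtration. For the base case, suppose $\mathfrak{h}$ is abelian. Then $\Sigma(\mathfrak{h})$ and $\mathfrak{N}\mathbf{\Sigma}^2(\mathfrak{h})$ can both be computed explicitly: $\Sigma(\mathfrak{h})$ is the simplicial abelian group $Z^1(\Omega_\bullet \otimes \mathfrak{h})$, while $\mathbf{\Sigma}^2_n(\mathfrak{h}) = \MC^2(\Omega_n \otimes \mathfrak{h})$ and its simplicial nerve is, by Lemma \ref{lemma:the two gothic Ns} together with Lemma \ref{nerve mc2 is eilenberg-maclane for abelian}, identified with $K((\Omega_\bullet \otimes \mathfrak{h})[1])$ evaluated degreewise. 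The canonical map \eqref{Sigma to simpl nerve SIGMA-2} then becomes, in each simplicial degree $n$, the inclusion of $0$-cocycles into the Dold--Kan complex $K((\Omega_n \otimes \mathfrak{h})[1])$, and one checks this is a weak equivalence since the higher homotopy of $\mathbf{\Sigma}^2_n$ is controlled by the acyclicity of $\Omega_n$. (Alternatively one observes both sides compute the same bicomplex and appeals to the Eilenberg--Zilber / generalized Eilenberg--Zilber theorem.)

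For the inductive step, let $\mathfrak{a} = Gr^i_F\mathfrak{h}$ be a graded piece of the lower central series, which is a central abelian subalgebra of $\mathfrak{h}/F^{i+1}\mathfrak{h}$, and write $\mathfrak{g} = \mathfrak{h}/F^{i+1}\mathfrak{h}$, $\mathfrak{g}/\mathfrak{a} = \mathfrak{h}/F^{i}\mathfrak{h}$. By \ref{subsubsection: central extensions sigma} the map $\Sigma(\mathfrak{g}) \to \Sigma(\mathfrak{g}/\mathfrak{a})_0$ is a principal fibration with group $\Sigma(\mathfrak{a})$, and by the analogous discussion preceding this proposition, $\mathfrak{N}\mathbf{\Sigma}^2(\mathfrak{g}) \to \mathfrak{N}\mathbf{\Sigma}^2(\mathfrak{g}/\mathfrak{a})_0$ is a principal fibration with group $\mathfrak{N}\mathbf{\Sigma}^2(\mathfrak{a})$ — here one must carry out for $\mathbf{\Sigma}^2$ the same central-extension analysis already done for $\mathfrak{N}\MC^2$, which goes through verbatim since $\MC^2$ and $\mathfrak{N}$ commute with products and $\Omega_n \otimes (-)$ preserves centrality. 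The map \eqref{Sigma to simpl nerve SIGMA-2} is then a map of principal fibrations covering the map on bases $\Sigma(\mathfrak{g}/\mathfrak{a})_0 \to \mathfrak{N}\mathbf{\Sigma}^2(\mathfrak{g}/\mathfrak{a})_0$ and compatible with the map on structure groups $\Sigma(\mathfrak{a}) \to \mathfrak{N}\mathbf{\Sigma}^2(\mathfrak{a})$.

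By the abelian base case the map on structure groups is an equivalence, and by the induction hypothesis the map $\Sigma(\mathfrak{g}/\mathfrak{a}) \to \mathfrak{N}\mathbf{\Sigma}^2(\mathfrak{g}/\mathfrak{a})$ is an equivalence — one must check this restricts to an equivalence on the distinguished components $\Sigma(\mathfrak{g}/\mathfrak{a})_0 \to \mathfrak{N}\mathbf{\Sigma}^2(\mathfrak{g}/\mathfrak{a})_0$, which follows because the $o_2$ obstruction maps are compatible on the two sides (both are computed from the curvature landing in $H^2(\mathfrak{a})$, and the identification is natural). A five-lemma argument on the long exact sequences of homotopy groups of the two principal fibrations, or equivalently an application of the gluing lemma for weak equivalences of fibration sequences, then shows that $\Sigma(\mathfrak{g}) \to \mathfrak{N}\mathbf{\Sigma}^2(\mathfrak{g})$ is an equivalence, completing the induction.

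The main obstacle I anticipate is the abelian base case: proving that the inclusion of the objects $\Sigma_n(\mathfrak{a}) = Z^1(\Omega_n \otimes \mathfrak{a})$ into the full simplicial nerve $\mathfrak{N}\mathbf{\Sigma}^2_\bullet(\mathfrak{a})$ — which records not just objects but also $1$- and $2$-morphisms of the simplicial groupoids $\MC^2(\Omega_n \otimes \mathfrak{a})$ weaving together across the cosimplicial resolution $\Delta^p_{\mathfrak{N}}$ — is a weak equivalence. This requires understanding $\mathfrak{N}$ applied to a simplicial object in simplicial groupoids (so a tri-simplicial bookkeeping problem), and showing the extra morphism data contributes nothing new homotopically, which is where one needs the acyclicity of the de Rham complex $\Omega_n$ in positive degrees together with the explicit combinatorics of Lemma \ref{lemma:the two gothic Ns}. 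Once the abelian case is in hand, the inductive step is formal.
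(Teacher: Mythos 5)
Your argument is correct in outline but follows a genuinely different route from the paper's. The paper proves this proposition by factoring the map through the Deligne $1$-groupoid level: it writes \eqref{Sigma to simpl nerve SIGMA-2} as $\Sigma(\mathfrak{h}) \to \mathcal{N}\mathbf{\Sigma}^1(\mathfrak{h}) \to \mathcal{N}\mathbf{\Sigma}^2(\mathfrak{h}) \to \mathfrak{N}\mathbf{\Sigma}^2(\mathfrak{h})$, handles the last arrow by the Hinich comparison of na\"{\i}ve and homotopy coherent nerves (Theorem \ref{thm: comparison of nerves}), the first by contractibility of the simplicial gauge group $[n]\mapsto\exp((\Omega_n\otimes\mathfrak{h})^0)$, and the middle one by passing to the full subcategories $\mathbf{\Gamma}^1\subset\mathbf{\Sigma}^1$, $\mathbf{\Gamma}^2\subset\mathbf{\Sigma}^2$ on the constant set of objects $\MC(\mathfrak{h})$ and using contractibility of the simplicial groups $[n]\mapsto\exp((\Omega_n\otimes\mathfrak{h})^{-1}_\mu)$ of $2$-automorphisms. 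Your induction on nilpotency length via central extensions and principal fibrations avoids all of that machinery (no appeal to \cite{H2} or to \cite{H3}, Prop.\ 8.2.5), at the price of two pieces of work the paper does not need here: (i) the abelian base case, where one must show that the quotient of the bisimplicial abelian group $([n],[p])\mapsto K((\Omega_n\otimes\mathfrak{a})[1])_p$ by its constant-in-$p$ subgroup of objects is acyclic — this does hold, but the relevant input is the acyclicity of each simplicial vector space $[n]\mapsto\Omega^j_n$ (the simplicial direction, for fixed form degree $j$), not only the acyclicity of the de Rham complex $\Omega_n^\bullet$ for fixed $n$, and both of your formulations point at the latter; and (ii) redoing the central-extension and $o_2$-obstruction analysis of \ref{subsubsection: central extensions MC} for $\Omega_n\otimes\mathfrak{g}$ uniformly in $n$ and checking it survives the diagonal, which is routine but not literally "verbatim." It is worth noting that your strategy is exactly the one the paper deploys later for Theorem \ref{thm:E=MC2}, so the supporting lemmas are in any case developed there; what the paper's shorter argument for the present proposition buys is independence from the abelian Eilenberg--Zilber computation and from the lower-central-series bookkeeping.
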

\begin{proof}
Let $\mathbf{\Gamma}^1(\mathfrak{h})$ (respectively, $\mathbf{\Gamma}^2(\mathfrak{h})$) denote the full subcategory of
$\mathbf{\Sigma}^1(\mathfrak{h})$ (respectively, of $\mathbf{\Sigma}^2(\mathfrak{h})$) whose set of objects is $\MC(\mathfrak{h})$ (a constant simplicial set). There is
a commutative diagram
\[
\begin{CD}
\mathbf{\Gamma}^1(\mathfrak{h}) @>>> \mathbf{\Gamma}^2(\mathfrak{h}) \\
@VVV @VVV \\
\mathbf{\Sigma}^1(\mathfrak{h}) @>{\eqref{Sigma-1 to SIGMA-2}}>> \mathbf{\Sigma}^2(\mathfrak{h})
\end{CD}
\]
The vertical arrows induce equivalences on respective nerves since, for each $n$ the functors
$\mathbf{\Gamma}^1(\mathfrak{h})_n \to \mathbf{\Sigma}^1(\mathfrak{h})_n =
\MC^1(\Omega_n\otimes\mathfrak{h})$ and $\mathbf{\Gamma}^2(\mathfrak{h})_n
\to \mathbf{\Sigma}^2(\mathfrak{h})_n = \MC^2(\Omega_n\otimes\mathfrak{h})$ are
equivalences by \cite{H3}, Proposition 8.2.5.

The map $\mathbf{\Gamma}^1(\mathfrak{h}) \to \mathbf{\Gamma}^2(\mathfrak{h})$
induces a bijection between sets of isomorphism classes of objects. For
$\mu\in\MC(\mathfrak{h})$, $\Hom_{\mathbf{\Gamma}^2(\mathfrak{h})}(\mu,\mu)$
is naturally identified with the nerve of the groupoid associated to the action of the simplicial
group $H(\mathfrak{h},\mu) \colon [n] \mapsto
\exp((\Omega_n\otimes\mathfrak{h})_\mu)$ on the simplicial set
$\Hom_{\mathbf{\Gamma}^1(\mathfrak{h})}(\mu,\mu)$. Since the group
$H(\mathfrak{h},\mu)$ is contractible (it is isomorphic as a simplicial set to $[n] \mapsto
\Omega_n^0\otimes\mathfrak{h}^{-1}$) the induced map
$\Hom_{\mathbf{\Gamma}^1(\mathfrak{h})}(\mu,\mu) \to
\Hom_{\mathbf{\Gamma}^2(\mathfrak{h})}(\mu,\mu)$ is an equivalence.
\end{proof}

\subsection{The map $\mathfrak{N}(\eqref{MC2 to SIGMA-2}) \colon
\mathfrak{N}\MC^2(\mathfrak{h}) \to \mathfrak{N}\mathbf{\Sigma}^2(\mathfrak{h})$
is an equivalence}
It suffices to show that the map
\[
\mathfrak{N}\MC^2(\mathfrak{h}) \to \mathfrak{N}\MC^2(\Omega_n\otimes\mathfrak{h})
\]
is an equivalence for all $n$. This follows from Proposition \ref{prop: g to g-omega on mc2
equiv}.

\begin{prop}\label{prop: g to g-omega on mc2 equiv}
Suppose that $\mathfrak{h}$ is a nilpotent DGLA concentrated in degrees greater than or
equal to $-1$. The functor
\begin{equation}\label{g to g-omega on mc}
\MC^2(\mathfrak{h}) \to \MC^2(\Omega_n\otimes\mathfrak{h})
\end{equation}
is an equivalence.
\end{prop}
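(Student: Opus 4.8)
The plan is to show that the functor \eqref{g to g-omega on mc} is fully faithful and essentially surjective, exploiting the fact that $\Omega_n$ is a commutative DGA quasi-isomorphic to $\mathbb{Q}$ via the inclusion $\mathbb{Q}\hookrightarrow\Omega_n$ and the evaluation $\ev_0\colon\Omega_n\to\mathbb{Q}$ at the vertex $0$ of $\Delta^n$, which is a retraction: $\ev_0\circ(\text{incl})=\id_{\mathbb{Q}}$. Tensoring with $\mathfrak{h}$ these give maps of nilpotent DGLA $\mathfrak{h}\to\Omega_n\otimes\mathfrak{h}$ and $\Omega_n\otimes\mathfrak{h}\to\mathfrak{h}$ whose composite is the identity, so the functor \eqref{g to g-omega on mc} has a retraction $\MC^2(\ev_0)$ on the level of $2$-groupoids. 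Thus it is automatically faithful on $2$-morphisms; the content is that it is an equivalence, i.e. that $\MC^2(\ev_0)$ is a (weak) inverse.

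First I would treat the abelian case as a warm-up: if $\mathfrak{a}$ is abelian concentrated in degrees $\geq -1$, then by the Poincar\'e lemma the inclusion $\mathfrak{a}\hookrightarrow\Omega_n\otimes\mathfrak{a}$ is a quasi-isomorphism of complexes, and by Lemma \ref{nerve mc2 is eilenberg-maclane for abelian} together with the Dold--Kan description the induced map $\MC^2(\mathfrak{a})\to\MC^2(\Omega_n\otimes\mathfrak{a})$ induces isomorphisms on $\pi_0,\pi_1$ of all hom-groupoids and on isomorphism classes of objects, hence is an equivalence of $2$-groupoids. For the general nilpotent case I would induct on the nilpotency length using the lower central series: writing $\mathfrak{a}=F^i\mathfrak{h}/F^{i+1}\mathfrak{h}$, which is an abelian ideal central in $\bar{\mathfrak{h}}:=\mathfrak{h}/F^{i+1}\mathfrak{h}$, I use the central extension structure developed in \ref{subsubsection: central extensions MC}. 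Concretely, $\pi_0$ of the hom-groupoids and the set of isomorphism classes of objects of $\MC^2$ fit into exact sequences of pointed sets (and partially of groups) involving $\MC(\mathfrak{a})$, $\MC(\bar{\mathfrak{h}})$, $H^2(\mathfrak{a})$, etc., and all the abelian ingredients are controlled by the quasi-isomorphism $\mathfrak{a}\hookrightarrow\Omega_n\otimes\mathfrak{a}$; a five-lemma-type argument then upgrades the inductive equivalence $\MC^2(\mathfrak{h}/F^{i+1})\to\MC^2(\Omega_n\otimes\mathfrak{h}/F^{i+1})$ to $\MC^2(\mathfrak{h}/F^{i+2})\to\MC^2(\Omega_n\otimes\mathfrak{h}/F^{i+2})$.

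Alternatively — and this may be cleaner to write — I would prove the statement by comparing nerves: it suffices to show $\mathfrak{N}\MC^2(\mathfrak{h})\to\mathfrak{N}\MC^2(\Omega_n\otimes\mathfrak{h})$ is a weak equivalence of Kan complexes, and this follows from the central extension fibration sequences for $\mathfrak{N}\MC^2$ established just above (the paragraph preceding this subsection), the abelian case via \eqref{sigma int nerve mc2 abelian} and Lemma \ref{lemma: quism ab alg equiv on sigma}, and the long exact sequence of homotopy groups, by exactly the same induction on nilpotency length used in the proof of Proposition \ref{prop:homotopy invariance}. In either approach the key point is that for \emph{abelian} $\mathfrak{a}$ the map $\mathfrak{a}\to\Omega_n\otimes\mathfrak{a}$ is a quasi-isomorphism (Poincar\'e lemma for the de Rham complex of the simplex), from which everything propagates.

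The main obstacle is the bookkeeping at the non-abelian inductive step: one must verify that the comparison map respects the principal fibration structure, i.e. that the square
\[
\begin{CD}
\mathfrak{N}\MC^2(\mathfrak{h}) @>>> \mathfrak{N}\MC^2(\Omega_n\otimes\mathfrak{h}) \\
@VVV @VVV \\
\mathfrak{N}\MC^2(\mathfrak{h}/F^{i+1}\mathfrak{h})_0 @>>> \mathfrak{N}\MC^2(\Omega_n\otimes\mathfrak{h}/F^{i+1}\mathfrak{h})_0
\end{CD}
\]
is a map of principal fibrations over the map $\mathfrak{N}\MC^2(Gr^i_F\mathfrak{h})\to\mathfrak{N}\MC^2(\Omega_n\otimes Gr^i_F\mathfrak{h})$, and that the identification of bases $\mathfrak{N}\MC^2(\cdots)_0$ is compatible with the obstruction maps $o_2$ — this uses the fact (noted in the excerpt) that the evaluation of a cohomology class in $H^\bullet(\mathfrak{a}\otimes\Omega_n)$ at a point of the simplex is independent of the point, so that $o_2^{-1}(0)$ on the $\Omega_n$-side matches $o_2^{-1}(0)$ on the $\mathfrak{h}$-side. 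Once these compatibilities are in place the long exact sequence of homotopy groups plus the abelian case and the five lemma finish the argument.
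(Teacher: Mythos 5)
Your proposal is essentially correct, but it takes a genuinely different route from the paper. The paper's proof has two short steps: the bijection on $\pi_0$ is quoted from Remark \ref{remark: MC1 to MC2} together with (the proof of) Lemma 2.2.1 of \cite{H1}, and the equivalence on hom-groupoids is reduced to the loop groupoids $\Hom_{\MC^2(\mathfrak{h})}(\mu,\mu)$ and proved there by a direct computation (Lemma \ref{lemma: loops with and without forms}): using the explicit model of this groupoid as the action groupoid of $\exp_\mu\mathfrak{h}^{-1}$ on $\exp(\ker\delta_\mu^{-1})$, the quasi-isomorphism $(\mathfrak{h},\delta_\mu)\rightleftarrows(\Omega_n\otimes\mathfrak{h},d+\delta_\mu)$ with retraction $\ev_0$ gives isomorphisms of automorphism groups and surjectivity on isomorphism classes, with no induction on nilpotency length. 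Your argument instead runs the induction on the lower central series through the principal fibration structure on $\mathfrak{N}\MC^2$ and the long exact sequence of homotopy groups, with the abelian case handled by the Poincar\'e lemma via Lemma \ref{nerve mc2 is eilenberg-maclane for abelian} — i.e.\ you transplant the scheme of Proposition \ref{prop:homotopy invariance} (and of Theorem \ref{thm:E=MC2}) to this setting. This is a legitimate alternative: it buys uniformity (it reproves the $\pi_0$ statement rather than citing Hinich, and the $o_2$-compatibilities you flag are exactly the ones the paper verifies later for the holonomy map), at the cost of more bookkeeping. One caveat: your nerve-level argument yields a weak equivalence $\mathfrak{N}\MC^2(\mathfrak{h})\to\mathfrak{N}\MC^2(\Omega_n\otimes\mathfrak{h})$, which is all that the surrounding proposition actually needs, but to recover the literal statement that the \emph{functor} is an equivalence of $2$-groupoids you must either invoke the fact that the Duskin nerve reflects biequivalences (\cite{D}) or run your five-lemma argument directly on the hom-groupoids as in your first variant; the paper's direct computation sidesteps this translation entirely.
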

\begin{proof}
The induced map $\pi_0(\eqref{g to g-omega on mc})$ is a bijection by Remark
\ref{remark: MC1 to MC2} and (the proof of) \cite{H1}, Lemma 2.2.1. The result now
follows from Lemma \ref{lemma: loops with and without forms} below.
\end{proof}

\begin{lemma}\label{lemma: loops with and without forms}
Suppose $\mu\in\MC(\mathfrak{h})$. The functor
\begin{equation}\label{equivalence on Homs}
\Hom_{\MC^2(\mathfrak{h})}(\mu, \mu) \to \Hom_{\MC^2(\Omega_n\otimes\mathfrak{h})}(\mu, \mu)
\end{equation}
is an equivalence.
\end{lemma}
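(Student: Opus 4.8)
The plan is to describe $\Hom_{\MC^2(\mathfrak h)}(\mu,\mu)$ explicitly as an action groupoid, to read off its set of connected components and its automorphism groups from the cohomology of the $\mu$-twisted complex $(\mathfrak h,\delta_\mu)$ with $\delta_\mu:=\delta+[\mu,-]$, and then to observe that \eqref{equivalence on Homs} realizes, on these invariants, the maps induced by the inclusion of complexes $(\mathfrak h,\delta_\mu)\hookrightarrow(\Omega_n\otimes\mathfrak h,\delta_\mu)$ (the twisted differential on $\Omega_n\otimes\mathfrak h$ being $d_{\mathrm{dR}}+\delta+[\mu,-]$), an inclusion which turns out to be a quasi-isomorphism. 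Since a functor of groupoids that induces a bijection on sets of connected components and an isomorphism on the automorphism group of each object is an equivalence, this proves the lemma.

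First I would unwind the definitions of \ref{ss:MC2}. Because $\mu\in\MC(\mathfrak h)$ one has $\delta_\mu^2=0$; because $\mathfrak h$ is concentrated in degrees $\geq-1$ the bracket $[a,b]_\mu=[a,\delta_\mu b]$ is a nilpotent Lie bracket on $\mathfrak h^{-1}$ and $t\mapsto\delta_\mu t$ is a Lie algebra homomorphism $(\mathfrak h^{-1},[\,,\,]_\mu)\to\mathfrak h^0$ (the needed identity $[\delta_\mu s,\delta_\mu t]=\delta_\mu[s,\delta_\mu t]$ is immediate from $\delta_\mu^2=0$). Exponentiating gives a homomorphism $\phi_\mu\colon\exp_\mu\mathfrak h^{-1}\to\exp\mathfrak h^0$. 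A short computation with the gauge action formula — using that $\frac{e^{\ad X}-1}{\ad X}$ is invertible on the nilpotent Lie algebra $\mathfrak h^0$ — shows that $\exp X$ fixes $\mu$ if and only if $\delta_\mu X=0$, so $\Hom_{\MC^1(\mathfrak h)}(\mu,\mu)=\exp Z^0(\mathfrak h,\delta_\mu)$; moreover $\phi_\mu(\exp_\mu\mathfrak h^{-1})=\exp B^0(\mathfrak h,\delta_\mu)$ acts on it by left translation, and $\ker\phi_\mu=\exp_\mu Z^{-1}(\mathfrak h,\delta_\mu)$. Thus $\Hom_{\MC^2(\mathfrak h)}(\mu,\mu)$ is the action groupoid of $\exp B^0(\mathfrak h,\delta_\mu)$ acting by left translation on $\exp Z^0(\mathfrak h,\delta_\mu)$. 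Since $B^0$ is an ideal of the Lie algebra $Z^0$ and the exponential is bijective, I would conclude that the set of connected components is in natural bijection with $H^0(\mathfrak h,\delta_\mu)$, while the automorphism group of every object is $\ker\phi_\mu\cong Z^{-1}(\mathfrak h,\delta_\mu)=H^{-1}(\mathfrak h,\delta_\mu)$, the last equality because $\mathfrak h^{-2}=0$.

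The same analysis applies to $\Omega_n\otimes\mathfrak h$, which is again concentrated in degrees $\geq-1$, and under the resulting identifications the functor \eqref{equivalence on Homs} induces on connected components and on automorphism groups, respectively, the maps $H^0(\mathfrak h,\delta_\mu)\to H^0(\Omega_n\otimes\mathfrak h,\delta_\mu)$ and $H^{-1}(\mathfrak h,\delta_\mu)\to H^{-1}(\Omega_n\otimes\mathfrak h,\delta_\mu)$ coming from the inclusion $\mathfrak h=\mathbb Q\otimes\mathfrak h\hookrightarrow\Omega_n\otimes\mathfrak h$ of constant forms. It remains to see that this inclusion is a quasi-isomorphism for $\delta_\mu$. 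Here $\mu$ sits in de Rham form-degree zero, so $[\mu,-]$ preserves the filtration of $\Omega_n\otimes\mathfrak h$ by form-degree and $\delta_\mu$ is filtered; the $E_1$-page of the associated spectral sequence is $\Omega_n^\bullet\otimes H^\bullet(\mathfrak h,\delta_\mu)$ with $d_1=d_{\mathrm{dR}}$, which by the polynomial Poincar\'e lemma collapses to $E_2=H^\bullet(\mathfrak h,\delta_\mu)$, the edge map being the one induced by the inclusion. (In the two degrees that matter this is also transparent directly: a $\delta_\mu$-closed element of $\Omega_n^0\otimes\mathfrak h^{-1}$ has $d_{\mathrm{dR}}$-closed, hence constant, form part, so $Z^{-1}(\Omega_n\otimes\mathfrak h,\delta_\mu)=Z^{-1}(\mathfrak h,\delta_\mu)$.)

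Putting these together, \eqref{equivalence on Homs} is bijective on connected components and an isomorphism on automorphism groups, hence an equivalence of groupoids. The main obstacle is the bookkeeping in the second paragraph: verifying that $\Hom_{\MC^2(\mathfrak h)}(\mu,\mu)$ is precisely the stated action groupoid — in particular that $\Hom_{\MC^1(\mathfrak h)}(\mu,\mu)=\exp Z^0(\mathfrak h,\delta_\mu)$ and that the $\exp_\mu\mathfrak h^{-1}$-action is left translation through $\phi_\mu$ — and identifying $\pi_0$ and the automorphism groups functorially; once this is in place the quasi-isomorphism statement and the conclusion are routine.
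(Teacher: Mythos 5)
Your proof is correct and follows essentially the same route as the paper's: both describe $\Hom_{\MC^2(\mathfrak{h})}(\mu,\mu)$ as the action groupoid of $\exp_\mu\mathfrak{h}^{-1}$ acting through $\delta_\mu$ by left translation on $\exp(\ker\delta_\mu|_{\mathfrak{h}^0})$, and both reduce the claim to the fact that $(\mathfrak{h},\delta_\mu)\to(\Omega_n\otimes\mathfrak{h},d+\delta_\mu)$ is a quasi-isomorphism between complexes concentrated in degrees $\geq -1$. The only cosmetic difference is in the surjectivity on isomorphism classes: the paper exhibits an explicit splitting $\exp(X)=\exp(\ev_0(X))\exp((d+\delta_\mu)U)$ using the retraction $\ev_0$, whereas you identify $\pi_0$ with $H^0$ of the twisted complex and invoke the quasi-isomorphism directly.
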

\begin{proof}
According to the description given in \ref{ss:MC2}, for any nilpotent DGLA $(\mathfrak{g},\delta)$ with $\mathfrak{g}^i = 0$ for $i < -1$
and $\mu\in\MC(\mathfrak{g})$ the groupoid $\Hom_{\MC^2(\mathfrak{g})}(\mu,
\mu)$ is isomorphic to the groupoid associated with the action of the group
$\exp_\mu\mathfrak{g}^{-1}$ on the set $\exp(\ker(\delta_\mu^{-1})) \subset
\exp(\mathfrak{g}^0)$ where $\delta_\mu = \delta + [\mu, .]$.

Note that, for any $X\in\ker(\delta_\mu^{-1})$, the automorphism group $\Aut(\exp(X))$
is isomorphic to (the additive group) $\ker(\delta_\mu^{-1})$.

The map
\begin{equation}\label{inclusion constants}
([n]\to[0])^\ast\otimes\id \colon (\mathfrak{h}, \delta) \to (\Omega_n\otimes\mathfrak{h}, d + \delta)
\end{equation}
is a quasi-isomorphism of DGLA with the quasi-inverse given by the evaluation map $\ev_0 := ([0]\to[n])^\ast\otimes\id
\colon \Omega_n\otimes\mathfrak{h} \to \mathfrak{h}$ (for any choice of a morphism $[0]\to[n]$) which is a morphism of DGLA as well. The same maps are mutually quasi-inverse quasi-isomorphisms of DGLA
\[
(\mathfrak{h}, \delta_\mu) \rightleftarrows (\Omega_n\otimes\mathfrak{h}, d + \delta_\mu) .
\]

Since \eqref{inclusion constants} is a quasi-isomorphism and both DGLA are concentrated in
degrees greater than or equal to $-1$, the induced map $\ker(\delta_\mu^{-1}) \to \ker((d
+ \delta_\mu)^{-1})$ an isomorphism, hence so are the maps of automorphism groups.

Since the map \eqref{equivalence on Homs} admits a left inverse (namely, $\ev_0$) it
remains to show that the induced map on sets of isomorphism classes is surjective. Note
that, since $\ev_0$ is a surjective quasi-isomorphism, the map $d+\delta_\mu \colon
\ker(\ev_0)^{-1} \to \ker(\ev_0)^0\bigcap\ker((d + \delta_\mu)^0)$ is an isomorphism.

Consider $X\in (\Omega_n\otimes\mathfrak{g})^0$. Then, $X = \ev_0(X) + Y$ with
$Y\in\ker(\ev_0)$, and $(d+\delta_\mu)X = 0$ if and only if $\delta_\mu\ev_0(X) = 0$ and
$(d+\delta_\mu)Y = 0$.

Suppose $X\in \ker((d + \delta_\mu)^{0})$. Then, $\exp(X) =
\exp(\ev_0(X))\cdot\exp(Z)$ where $Z\in\ker(\ev_0)^0\bigcap\ker((d + \delta_\mu)^0)$,
and, therefore, $Z = (d+\delta_\mu)U$ for a \emph{uniquely determined} $U$.
\end{proof}


\begin{thebibliography}{10}
\bibitem{BGNT} P.~Bressler, A.~Gorokhovsky, R.~Nest, and B.~Tsygan.
\newblock Deformation quantization of gerbes.
\newblock \emph{Adv. Math.}, 214(1):230--266, 2007.

\bibitem{BGNT2} P.~Bressler, A.~Gorokhovsky, R.~Nest, and B.~Tsygan.
\newblock Formality theorem for gerbes.
\newblock \emph{Adv. Math.}, 273:215-241,2015.
\newblock Preprint arXiv:1308.3951

\bibitem{C} J.-M.~Cordier.
\newblock Sur la notion de diagramme homotopiquement coh\'erente.
\newblock \emph{Cahiers Topologie G\'eom. Diff\'erentielle}, 23(1), pages 93--112, 1982.
%
\bibitem{Del} P.~Deligne.
\newblock {Letter to L.~Breen}, 1994.

\bibitem{D} J.~W. Duskin.
\newblock Simplicial matrices and the nerves of weak {$n$}-categories. {I}.
  {N}erves of bicategories.
\newblock \emph{Theory Appl. Categ.}, 9, pages 198--308 (electronic), 2001/02.
\newblock CT2000 Conference (Como).

\bibitem{G1} E.~Getzler.
\newblock {Lie theory for nilpotent L-infinity algebras}.
\newblock \emph{Ann. of Math.} ({2}), \textbf{170} (2009), no. 1, 271--301.

\bibitem{G2} E.~Getzler.
\newblock A {D}arboux theorem for {H}amiltonian operators in the formal
  calculus of variations.
\newblock \emph{Duke Math. J.}, 111(3), pages 535--560, 2002.

\bibitem{GM} W.M.~Goldman, J.J.~Millson.
\newblock The deformation theory of representations of fundamental groups of compact K\"{a}hler manifolds.
\newblock \emph{Bull. Amer. Math. Soc. (N.S.)} Volume 18, Number 2 (1988), 153--158.
%
%
\bibitem{H1} V.~Hinich.
\newblock Descent of Deligne groupoids.
\newblock Internat. Math. Res. Notices 1997, no. 5, pages 223-239.
%
\bibitem{H2} V.~Hinich.
\newblock Homotopy coherent nerve in deformation theory.
\newblock arXiv:0704.2503.
%
\bibitem{H3} V.~Hinich.
\newblock DG-coalgebras as formal stacks.
\newblock \emph{J. Pure Appl. Algebra} 162 (2001), 209-250.
%
\bibitem{H4} V.~Hinich.
\newblock Deformations of homotopy algebras.
\newblock \emph{Communication in Algebra} 32 (2004), 473-494.

\bibitem{L} J.~Lurie.
\newblock Higher topos theory.
\newblock Annals of Mathematics Studies, 170. Princeton University Press, Princeton, NJ, 2009.
%
\bibitem{M} J.P.~May.
\newblock Simplicial objects in algebraic topology.
\newblock The University of Chicago press, 1967.

\end{thebibliography}
\end{document}